\theoremstyle{plain}
\newtheorem{theorem}{Theorem}[section]
\newtheorem{lemma}[theorem]{Lemma}
\newtheorem{proposition}[theorem]{Proposition}
\newtheorem{problem}[theorem]{Problem}
\newtheorem{corollary}[theorem]{Corollary}
\newcounter{maintheorem}
\newtheorem{mainth}[maintheorem]{Theorem}
\theoremstyle{definition}
\newtheorem{definition}[theorem]{Definition}
\newtheorem{example}[theorem]{Example}
\newcommand{\D}{\mathrm{D}}
\newcommand{\dist}{\mathrm{dist}}
\renewcommand{\leq}{\leqslant}
\renewcommand{\geq}{\geqslant}
\newcommand{\R}{\mathbb{R}}
\newcommand{\N}{\mathbb{N}}
\newcommand{\conv}{\mathrm{conv}}
\newcommand{\inte}{\mathrm{int}\:}
\renewcommand{\epsilon}{\varepsilon}
\renewcommand{\phi}{\varphi}
\author[C.A.~De~Bernardi]{Carlo Alberto De Bernardi}
\address{Dipartimento di Matematica per le Scienze economiche, finanziarie ed attuariali, Universit\`a Cattolica del Sacro Cuore, 20123 Milano,Italy}
\email{carloalberto.debernardi@unicatt.it}
\email{carloalberto.debernardi@gmail.com}
\author[J.~Somaglia]{Jacopo Somaglia}
\address{Politecnico di Milano, Dipartimento di Matematica, Piazza Leonardo da Vinci 32, 20133 Milano, Italy.}
\email{jacopo.somaglia@polimi.it}
\subjclass[2020]{Primary 46B03, 	46B20 ; Secondary 52A07}
\keywords{Renorming, ALUR norm, LUR norm, G\^ateaux norm}
\thanks{The research of the authors has been partially
supported by the GNAMPA (INdAM -- Istituto Nazionale di Alta Matematica).
}
\title[Rotund G\^ateaux smooth  norms which are not LUR]{Rotund G\^ateaux smooth norms which are not locally uniformly rotund}
\begin{document}
	
	\begin{abstract} We provide, in every infinite-dimensional separable Banach space, an average locally uniformly rotund (and hence rotund) G\^ateaux smooth renorming which is not locally uniformly rotund. This solves an open problem posed in a recent monograph by A.J. Guirao, V. Montesinos, and V. Zizler.
		
	\end{abstract}

	\maketitle
\section{Introduction}
 Renorming techniques and relations between smoothness and rotundity play a central role in the study of Banach spaces. Very recently A.J.~Guirao, V.~Montesinos and V.~Zizler published a monograph which collects the most important results, techniques and open problems in renorming theory \cite{GMZ}; we refer also to \cite{DGZ} as a main reference in the field and to \cite{DHR1, DHR2, HQ, GR, Q} for very recent progresses in the topic. Several different notions of rotundity of the unit ball of a Banach space has been introduced and widely studied. The most common, that can be considered already classical,  are {\em strict rotundity} (R), {\em uniform rotundity} (UR), and  {\em local uniform rotundity} (LUR). It is well-known and easy-to-prove that local uniform rotundity of the unit ball implies that {\em each point of unit sphere is strongly exposed by any of its supporting functionals} (spaces satisfying this condition are called almost locally uniformly rotund (almost LUR)  in the literature \cite{AGP, BL}), 
which in turns implies that	
	{\em each point of the unit sphere is a denting point} (spaces satisfying this condition are called average locally uniformly rotund (ALUR) in the literature \cite{LLT, Tro85}). Notice that for reflexive spaces the notions of ALUR and almost LUR coincide (see Theorem~\ref{th: ALUR vs almost Lur} below). 
 
  An interesting problem consists in determining  in which situation and to what extent the rotundity properties cited above are distinct or not, in particular, when additional  smoothness properties are assumed.  Despite an extensive literature on the subject, surprisingly enough, the following problem is open (see \cite[p.495, Problem~5]{GMZ}).

 \begin{problem}\label{p: GMZ}
 Does every infinite-dimensional separable Banach space admit a norm that is rotund and G\^ateaux smooth but not LUR? 
 \end{problem}

 \noindent Another related problem arises naturally taking into account the  fact that each almost LUR Fr\'echet smooth norm is automatically LUR, the proof of which easily follows by the \v Smulyan Lemma (see also \cite[Corollary~2.11]{AGP}).

  \begin{problem}\label{p: AGP}
 Is every   almost LUR G\^ateaux smooth space automatically LUR? 
 \end{problem}

\noindent The purpose of our paper is to answer these two questions. The main result we obtained reads as follows (see, Theorem~\ref{th: main theorem} below).
\begin{mainth}\label{th: thm A}
Every infinite-dimensional separable Banach space admits an average locally uniformly rotund (and hence rotund) G\^ateaux smooth equivalent norm $|\cdot|$ which is not locally uniformly rotund.
\end{mainth}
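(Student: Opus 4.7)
The plan is to construct the required equivalent norm $|\cdot|$ on $X$ as a perturbation of a base norm, of the form $|x|^2 = \|x\|_0^2 + \Phi(x)$, where $\|\cdot\|_0$ is an equivalent norm that is already both LUR and G\^ateaux smooth on $X$ (such a norm exists on every infinite-dimensional separable Banach space by standard renorming machinery), and $\Phi \colon X \to [0,\infty)$ is a convex, $2$-homogeneous, G\^ateaux smooth function engineered to introduce a single controlled LUR defect without destroying denting. Concretely, I would take $\Phi$ as a weighted convergent series $\Phi(x) = \sum_n c_n h_n(x)$ built from a suitable biorthogonal (Markushevich-type) system $(x_n, f_n) \subset X \times X^*$, with each $h_n$ convex, $2$-homogeneous, G\^ateaux smooth and nonnegative, designed so that the modulus of convexity of $\Phi$ vanishes along one prescribed asymptotic direction; the positive weights $c_n$ would be taken summable so that the series and its formal derivatives converge uniformly on bounded sets.

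The plan for the verifications is as follows. Rotundity of $|\cdot|$ is inherited from that of $\|\cdot\|_0$ since adding a nonnegative convex function to a strictly convex one preserves strict convexity. G\^ateaux smoothness of $|\cdot|$ follows from G\^ateaux smoothness of $\|\cdot\|_0^2$ together with termwise differentiability of the series for $\Phi$, after verifying uniform convergence of the differentials. ALUR at an arbitrary $x \in S_{|\cdot|}$ would be established by producing, for each $\epsilon>0$, a small slice of $B_{|\cdot|}$ at $x$: start from a narrow slice of $B_{\|\cdot\|_0}$ at $x/\|x\|_0$ (available from LUR of $\|\cdot\|_0$) and transfer it to a slice of $B_{|\cdot|}$ using the G\^ateaux differential of $\Phi$ to pin down the supporting functional. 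Failure of LUR would be exhibited at a distinguished sphere point $x_0$ via a sequence $y_k = x_0 + v_k$ with $(v_k)$ weakly null but bounded away from $0$ in norm, chosen aligned with the flat asymptotic direction of $\Phi$, for which a direct computation gives $|(x_0+y_k)/2| \to |x_0|$ while $|y_k - x_0|\not\to 0$.

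The principal obstacle is the inherent tension between ALUR and the failure of LUR: strengthenings of rotundity of a given norm typically produce LUR at once. The key conceptual point to exploit is that denting is a first-order (linear-slice) property while LUR is a second-order (midpoint) property, so the perturbation $\Phi$ must be smooth and nondegenerate enough to preserve the linear-slice structure inherited from $\|\cdot\|_0$, yet strictly degenerate along one asymptotic direction. Simultaneously calibrating the pieces $h_n$, the biorthogonal data $(x_n, f_n)$ and the weights $c_n$ so that the flat direction materializes at precisely one sphere point $x_0$ of $|\cdot|$ and does not propagate into a loss of denting at points nearby — so that ALUR globally still holds — is the delicate technical core of the argument.
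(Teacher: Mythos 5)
There is a fatal structural flaw in your plan. If $\|\cdot\|_0$ is LUR and $\Phi=\sum_n c_nh_n$ is a nonnegative, convex, $2$-homogeneous function, then the norm defined by $|x|^2=\|x\|_0^2+\Phi(x)$ is \emph{automatically} LUR, no matter how degenerate $\Phi$ is along any direction. Indeed, for any nonnegative convex $2$-homogeneous $\Phi$ one has $\Phi(x+y)=4\Phi\bigl(\tfrac{x+y}{2}\bigr)\leq 2\Phi(x)+2\Phi(y)$, so both summands in
$$2|x|^2+2|x_n|^2-|x+x_n|^2=\bigl[2\|x\|_0^2+2\|x_n\|_0^2-\|x+x_n\|_0^2\bigr]+\bigl[2\Phi(x)+2\Phi(x_n)-\Phi(x+x_n)\bigr]$$
are nonnegative; if the left-hand side tends to $0$, so does the first bracket, and LUR of $\|\cdot\|_0$ already forces $x_n\to x$. (This is exactly the decomposition argument the paper itself uses, in the opposite direction, to \emph{prove} LUR of an auxiliary norm and to prove rotundity of the final norm.) So no calibration of the $h_n$, the biorthogonal system, or the weights $c_n$ can produce an LUR defect: the obstruction you identify as "the delicate technical core" is in fact an impossibility for any construction of this additive form. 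A second, independent problem is your non-LUR witness: a sequence $y_k=x_0+v_k$ with $v_k$ weakly null satisfies $y_k\rightharpoonup x_0$, and the degeneracy $2|x_0|^2+2|y_k|^2-|x_0+y_k|^2\to0$ forces $|y_k|\to|x_0|$; since ALUR is equivalent to rotundity plus the Kadec property, this would force $y_k\to x_0$ in norm, contradicting $\|v_k\|$ being bounded away from $0$. Any witness of non-LUR in an ALUR space must fail to converge weakly to $x_0$.

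The paper circumvents both obstacles by modifying the unit ball non-additively: it takes $D=\conv\bigl(B_{(X,\norma{\cdot})}\cup B\bigr)$, where $\norma{\cdot}$ is LUR and G\^ateaux smooth and $B=T[B_{\ell_2}]$ is the image of the Hilbert ball under a compact, injective, dense-range operator $T$ arranged (via an M-basis) so that $B$ protrudes beyond $B_{(X,\norma{\cdot})}$ only near $\sqrt2\,e_1$. The convex hull creates genuine flatness near $\sqrt2\,e_1$ (killing LUR, with a witness sequence $x_n=\tfrac{1}{\sqrt2}e_1+\tfrac{1}{\sqrt2}e_{3n}$ that converges weakly to $\tfrac{1}{\sqrt2}e_1\neq\sqrt2\,e_1$); compactness of $B$ yields the Kadec property of the new norm, and the Hilbertian structure of $B$ on a dense subspace yields G\^ateaux smoothness at the new boundary points. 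Since the hull destroys rotundity, a small additive perturbation $\sum_{n\geq2}2^{-n}f_n(x)^2$ is then applied --- but to a norm that is already \emph{not} LUR, so the additive step only restores rotundity and cannot reintroduce LUR. If you want to salvage your approach, the essential missing idea is this enlargement of the ball by a compact convex set, rather than an additive perturbation of the square of an LUR norm.
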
 
Theorem \ref{th: thm A} answers in the affirmative Problem \ref{p: GMZ} above, and, since for reflexive spaces the notions of ALUR and almost LUR coincide, it also solves in the negative Problem \ref{p: AGP} above.

 Let us  describe the structure of the paper. Section~\ref{sec: notation} contains some notation, preliminaries, and a brief study of properties ALUR and almost LUR, showing that the two notions do not coincide in general. Section~\ref{sec: main result} is devoted to the proof of our main result Theorem~\ref{th: main theorem}, whose main ingredient is a geometrical construction in the separable Hilbert space (that is actually a bit hidden in the proof itself) combined with a suitable use of Markushevich bases. Finally, in the last section we  lift our construction to some classes of nonseparable Banach spaces and we present some final remarks. For example, we observe that the existence of our renorming $|\cdot|$ can be obtained in each (WCD) space (see Definition~\ref{def: WCGandsons}). The class of (WCD) Banach spaces is also known as Va\v s\'ak, it contains properly the class of weakly compactly generated Banach spaces (in particular it contains each reflexive space) and it shares good renorming properties. Indeed, each (WCD) has a G\^ateaux smooth LUR equivalent norm, we refer to \cite{DGZ,GMZ,HMVZ,CCS,FGMZ} for more information on this class. Finally, we provide some examples of Banach spaces which are not (WCD) that can be renormed by an ALUR G\^ateaux smooth norm which is not LUR.

\section{Notations and preliminaries}\label{sec: notation}
Throughout this  paper, all normed and Banach spaces are real and infinite-dimensional. Let $X$ be a normed space, by $X^*$ we denote the dual space of $X$. By $B_X$, $U_X$, and $S_X$ we denote the closed unit ball, the open unit ball, and the unit sphere of $X$, respectively.
	Moreover, in situations when more than one
	norm on $X$ is considered, we denote by $B_{(X,\|\cdot\|)}$, $U_{(X,\|\cdot\|)}$ and $S_{(X,\|\cdot\|)}$ the closed unit ball, the open unit ball, and the closed unit sphere  with respect to the norm ${\|\cdot\|}$, respectively. By $\|\cdot\|^*$ we denote the dual norm of $\|\cdot\|$.
	Given a set $A\subset X$ we denote by $\partial A$  the boundary of $A$. For $x,y\in X$, $[x,y]$ denotes the closed segment in $X$ with
endpoints $x$ and $y$, and $(x,y)=[x,y]\setminus\{x,y\}$ is the
corresponding ``open'' segment; the segment $[x,y)$ is defined similarly. We recall a geometric observation, which turns out to be very useful in the next sections (cf. \cite{Klee}).
\begin{lemma}\label{l: segmento interno}
    Let $X$ be a normed space. Let $C$ be a nonempty closed convex subset of $X$ with non-empty interior. If $x\in\inte C$ and $y \in \partial C$, then $[x,y)\subset \inte C$.
\end{lemma}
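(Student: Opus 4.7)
The plan is to prove the lemma by a direct open-ball argument using convexity. Since $C$ is closed, $\partial C \subset C$, so $y\in C$. Because $x\in\inte C$, there exists $r>0$ with $B(x,r)\subset C$, where $B(x,r)$ denotes the open ball of radius $r$ centered at $x$. Fix $t\in(0,1)$ and set $z_t=(1-t)x+ty$; I want to exhibit an explicit open ball around $z_t$ contained in $C$.

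The key computation is the following. For any $u\in X$ with $\|u\|<(1-t)r$, I can write
\[
z_t+u = (1-t)\Bigl(x+\tfrac{u}{1-t}\Bigr)+ty.
\]
Since $\bigl\|\tfrac{u}{1-t}\bigr\|<r$, the point $x+\tfrac{u}{1-t}$ belongs to $B(x,r)\subset C$. As $y\in C$ and $C$ is convex, the convex combination $z_t+u$ lies in $C$. Hence $B(z_t,(1-t)r)\subset C$, so $z_t\in\inte C$. Since $t\in(0,1)$ was arbitrary, this gives $[x,y)\subset \inte C$.

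There is no real obstacle here: the argument is entirely elementary and uses only the convexity of $C$, the openness of $B(x,r)$, and the fact that $\partial C\subset C$ because $C$ is closed. The only small care needed is in handling the parameter $t$ near $1$, but the factor $(1-t)r$ in the radius of the ball around $z_t$ makes this automatic (and consistent with the fact that the endpoint $y$ itself is excluded from the conclusion).
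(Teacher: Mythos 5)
Your argument is correct and complete: the identity $z_t+u=(1-t)\bigl(x+\tfrac{u}{1-t}\bigr)+ty$ together with convexity gives $B(z_t,(1-t)r)\subset C$, which is exactly the classical proof. The paper does not prove this lemma itself but only cites Klee's work, and your proof is the standard argument that reference supplies, so there is nothing to compare beyond noting full agreement.
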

 A \emph{biorthogonal system} in a Banach space $X$ is a system $(e_\gamma; f_\gamma)_{\gamma\in\Gamma}\subset X\times X^*$, such that $f_\alpha(e_\beta)=\delta_{\alpha,\beta}$ ($\alpha,\beta\in \Gamma$). A biorthogonal system is \emph{fundamental} if ${\rm span}\{e_\gamma\}_{\gamma\in\Gamma}$ is dense in $X$; it is \emph{total} when ${\rm span}\{f_\gamma\} _{\gamma\in\Gamma}$ is $w^*$-dense in $X^*$. A \emph{Markushevich basis} (M-basis) is a fundamental and total biorthogonal system. We refer to \cite{HMVZ} and \cite{S2} for good references on M-bases and \cite{Ha19,HRST,K4} for some recent progresses in the topic.

Let us recall that the duality map $\D_X: S_X\to2^{S_{X^*}}$ is the function defined, for each $x\in S_X$, by
$$\D_X(x):=\{x^*\in S_{X^*};\, x^*(x)=1\}.$$
Given a subset $K$ of $X$, a {\em slice} of $K$ is a
set of the form
$$S(K,x^*,\alpha) := \{x^*\in K;\,  x^*(x)> \sup x^*(K)-\alpha\},$$
where $\alpha> 0$ and $x^*\in X^*\setminus\{0\}$ is bounded above on
$K$. For convenience of the reader, we recall some standard notions which we list in the following definition.

\begin{definition}\label{d: basic def}
Let $x\in S_X$ and $x^*\in S_{X^*}$. We say that:
\begin{itemize}
\item  $x$ is  an {\em extreme point} of $B_X$ if it does not lie in any ``open'' segment contained in $B_X$; 
\item $X$ is rotund (R) if each point of $S_X$ is an extreme point of $B_X$;
\item $x$ is  {\em supported} by $x^*$ if $x^*\in \D_X(x)$;
\item  $x$ is a {\em denting point} of $B_X$ if for each neighbourhood $V$ of $x$ in the norm topology 
there exists a slice $S$ of $B_X$ such that
$x\in S\subset V$;
\item $x$ is  {\em strongly exposed} by $x^*$ if $x^*\in \D_X(x)$
and, for each norm neighbourhood $V$ of $x$, there exists $\alpha>0$ such that $S(B_X,x^*,\alpha)\subset
V$
(equivalently, if $x^*\in \D_X(x)$
and $x_n\to x$ for all sequences $\{x_n\}_n\subset B_X$ such that $\lim_{n\to \infty}x^*(x_n)=1$)
;
\item $x$ is a {\em locally uniformly rotund (LUR) point} of $B_X$
	if 
 $x_n\to x$, whenever $\{x_n\}_{n}\subset X$ is such that $$\lim_n 2\|x\|^2 + 2\|x_n\|^2 - \|x+x_n\|^2
 =0;$$
\item $x$ is a {\em weakly locally uniformly rotund (WLUR) point} of $B_X$
	if 
 $x_n\to x$ weakly, whenever $\{x_n\}_{n}\subset X$ is such that $$\lim_n 2\|x\|^2 + 2\|x_n\|^2 - \|x+x_n\|^2
 =0;$$
\item 
$x$ is a  {\em G\^ateaux smooth point} (respectively, {\em Fr\'echet smooth point})  of $B_X$ if, 
 the following limit
holds $$\lim_{h\to0}\frac{\|x+hy\|+\|x-hy\|-2}{h}=0,$$
whenever $y\in S_X$ (respectively, uniformly for  $y\in S_X$).
\end{itemize}
\end{definition}

If  each point of $S_X$ is LUR, G\^ateaux smooth, Fr\'echet smooth, respectively, we say that $X$ and the corresponding norm $\|\cdot\|$ is  LUR, G\^ateaux smooth, Fr\'echet smooth, respectively.

A Banach space $X$ has the \textit{Kadec property} (resp. \textit{Kadec-Klee property}) if the norm and the weak topology coincide on the unit sphere $S_X$ (resp. convergent sequences in $(S_X,w)$ are convergent in $(S_X,\|\cdot\|)$). If $\ell_1$ does not embed into $X$, then Kadec-Klee implies Kadec, see \cite{Tro85}.

We say that a Banach space $X$ is \textit{average locally uniformly rotund} (ALUR, in short) if every point of the unit sphere $S_X$ is a denting point. In fact, the original definition of ALUR space, given by S. Troyanski in \cite{Tro85}, requires some technical preliminaries which are not needed in our paper, see \cite{LLT} for the equivalence of the two definitions. Clearly, if a norm is LUR, then it is ALUR. On the other hand, the other implication does not hold, see \cite[Example~2.2]{Tro85}. Notice that such an example, provided in $X=\ell_1$, is not G\^ateaux differentiable at $x=e_1$. Furthermore, in \cite{Tro85} it is proved that a norm is ALUR if and only if it is rotund and has the Kadec property. Finally, we recall that in \cite{YOST} it is proved that if $X$ is reflexive, and $\|\cdot\|$ is rotund and has the Kadec property (equivalently the Kadec-Klee property), then $\|\cdot\|^*$ is Fr\'echet smooth. Since we will make use of these two results, we summarize them in the following theorem.

\begin{theorem}\label{th: ALURsseR+KK}
Let $X$ be a Banach space. Then:
\begin{enumerate}
    \item $X$ is ALUR if and only if $X$ is rotund and has the Kadec property;
    \item Suppose that $X$ is reflexive. If $X$ is rotund and has the Kadec property, then $X^*$ is Fr\'echet smooth. 
\end{enumerate}    \end{theorem}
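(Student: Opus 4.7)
The plan is to address the two items of Theorem~\ref{th: ALURsseR+KK} separately; both are classical, and the proofs combine slice techniques, extreme-point arguments, and \v Smulyan-type characterisations of differentiability.

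For item (1) in the $(\Rightarrow)$ direction, I first check that every denting point of $B_X$ is extreme: if $x=\tfrac12(y+z)$ with $y,z\in B_X$ and $y\neq z$, set $V=B(x,\|y-x\|/2)$, so neither $y$ nor $z$ is in $V$. By denting there is a slice $S=S(B_X,x^*,\alpha)\subset V$ with $x\in S$. Then $x^*(y)+x^*(z)=2x^*(x)>2\sup_{B_X}x^*-2\alpha$, forcing at least one of $y,z$ to lie in $S\subset V$, a contradiction; this gives rotundity. The Kadec property follows because each $x\in S_X$ has, as a denting point, a norm-neighbourhood basis consisting of slices intersected with $S_X$, and such slices are relatively weakly open in $S_X$, so the weak and norm topologies agree on $S_X$. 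For the $(\Leftarrow)$ direction, fix $x\in S_X$, pick $x^*\in\D_X(x)$ by Hahn--Banach, and argue that the slices $S(B_X,x^*,\alpha)$ shrink to $\{x\}$ in the norm topology as $\alpha\to 0^+$. Suppose not: there exist $\epsilon>0$ and $y_n\in B_X$ with $x^*(y_n)\to 1$ and $\|y_n-x\|\geq\epsilon$. Then $\|y_n\|\to 1$, so $w_n:=y_n/\|y_n\|\in S_X$ with $x^*(w_n)\to 1$. Any weak cluster point $w$ of $(w_n)$ (in $B_X$ if $X$ is reflexive, in $B_{X^{**}}$ in general) satisfies $x^*(w)=\|w\|=1$; rotundity of $X$ pins $x$ as the unique point of $B_X$ with $x^*(\cdot)=1$, so $w=x$, giving $w_n\to x$ weakly. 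The Kadec property then upgrades this to norm convergence, contradicting $\|y_n-x\|\geq\epsilon$; hence $x$ is a denting point of $B_X$.

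For item (2), I would invoke \v Smulyan's Lemma for the dual: $\|\cdot\|^*$ is Fr\'echet differentiable at $x^*\in S_{X^*}$ iff every sequence $(x_n)\subset B_X$ with $x^*(x_n)\to 1$ is norm-Cauchy. Given such a sequence, reflexivity of $X$ yields a weakly convergent subsequence with limit $y$ satisfying $x^*(y)=\|y\|=1$; by rotundity $y$ is the unique maximiser of $x^*$ on $S_X$, so the whole sequence $(x_n)$ converges weakly to $y$. Since $\|x_n\|\to 1$, applying the Kadec property to $(x_n/\|x_n\|)$ yields $x_n\to y$ in norm, so $(x_n)$ is Cauchy. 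By James' theorem in the reflexive setting every $x^*\in S_{X^*}$ attains its norm on $S_X$, so the above argument holds at each $x^*\in S_{X^*}$, giving Fr\'echet smoothness of $X^*$.

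The main technical subtlety is the non-reflexive case of item (1) $(\Leftarrow)$: the weak cluster point of $(w_n)$ a priori lives in $B_{X^{**}}$, and one must ensure that rotundity in $X$ alone pins it down to $x$. The remedy, as in~\cite{LLT,Tro85}, is to use the original sequential definition of ALUR, which avoids any passage to the bidual; this amounts to reformulating the denting condition so that only elements of $X$ are ever involved. Item (2) is by contrast fairly direct once reflexivity provides the weak cluster points in $B_X$.
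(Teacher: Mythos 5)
First, a point of comparison: the paper does not prove this theorem at all. It quotes part (i) from Troyanski \cite{Tro85} (with \cite{LLT} for the equivalence of the various formulations of ALUR) and part (ii) from Yost \cite{YOST}, and merely records them for later use. So any self-contained argument is by definition a different route. Within your attempt, the implication ``ALUR $\Rightarrow$ rotund $+$ Kadec'' in (i) and the whole of (ii) are essentially correct and standard: denting points are extreme by the slice-averaging argument; slices are relatively weakly open on $S_X$, so a neighbourhood basis of slices forces the weak and norm topologies to agree there; and for (ii) the \v Smulyan criterion for Fr\'echet differentiability of the dual norm combines cleanly with reflexivity (Eberlein--\v Smulian plus uniqueness of the weak limit coming from rotundity) and the Kadec property. (James' theorem is not needed there; the \v Smulyan criterion applies whether or not $x^*$ attains its norm.)

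The converse implication in (i) --- which is precisely the direction the paper uses in the proof of its main theorem --- has a genuine gap. You fix $x\in S_X$, pick a single $x^*\in\D_X(x)$, and try to show that the slices $S(B_X,x^*,\alpha)$ shrink to $x$ in norm, i.e.\ that $x$ is \emph{strongly exposed} by $x^*$. That is strictly stronger than $x$ being a denting point (for which it suffices that \emph{some} slice, by \emph{some} functional, fits inside each neighbourhood), and it is false under the stated hypotheses: the paper's own example in Section~2 exhibits a rotund norm on $\ell_1$ with the Kadec property (hence ALUR) together with a point of the sphere that is supported but \emph{not} strongly exposed by one of its supporting functionals. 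So your argument must break somewhere, and it breaks exactly where you flag it: the cluster point $w$ of $(w_n)$ is only a weak$^*$ cluster point in $B_{X^{**}}$, and rotundity of $X$ says nothing about points of $B_{X^{**}}\setminus X$. The proposed remedy --- switching to the sequential definition of ALUR so that ``only elements of $X$ are ever involved'' --- is not an argument: it does not tell you how to locate the cluster point, and it cannot, since the conclusion you are aiming for (strong exposure by every supporting functional, i.e.\ almost LUR) is simply not implied by rotund $+$ Kadec outside the reflexive case. The correct proof goes through the Lin--Lin--Troyanski characterisation of denting points: an extreme point of $B_X$ that is a point of weak-to-norm continuity of the identity on $B_X$ is automatically a denting point. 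Rotundity gives extremality, and the Kadec property gives the continuity (a net in $B_X$ converging weakly to a point of $S_X$ has norms tending to $1$, so one can renormalise and apply Kadec); but the extreme-plus-continuity-implies-denting step rests on Bourgain's ``superlemma'' and is genuinely non-trivial. You would need to supply that ingredient to close the gap.
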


We conclude this section by comparing the definition of average locally uniformly rotund norm and the one of almost locally uniformly rotund norm. A Banach space $X$ is \textit{almost locally uniformly rotund} (\textit{almost LUR}, for short) if for every $x\in S_X$ and every pair of sequences $\{x_n\}_{n}\subset S_X$ and $\{x_n^*\}_{n}\subset S_{X^*}$ such that $\lim_m(\lim_n(x_m^*((x_n+x)/2)))=1$, $\{x_n\}_{n}$ converges to $x$. In \cite{BL} it is proved that a Banach space is almost LUR if and only if each point $x\in S_X$ is strongly exposed by $x^*\in S_{X^*}$ whenever $x$ is supported by $x^*$. This equivalence is used in \cite{AGP} for proving that an almost LUR, Fr\'{e}chet smooth norm is actually LUR. Notice that the same result is no longer true if we replace Fr\'{e}chet by G\^ateaux (combine Theorem \ref{th: main theorem} with Theorem \ref{th: ALUR vs almost Lur} below). In literature almost LUR norms are abbreviated as ALUR norms. Since this choice might be ambiguous, and we were not able to find references that compare the two notions, we decided to include here some results that clarify the relations between these two definitions. 

\begin{theorem}\label{th: ALUR vs almost Lur}
    Let $X$  be a Banach space, then the following holds true
    \begin{enumerate}
    \item if $X$ is almost LUR, then $X$ is ALUR.
    \item if $X$ is reflexive and ALUR, then $X$ is almost LUR.
    \end{enumerate}
\end{theorem}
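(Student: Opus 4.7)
I would treat the two statements separately, with (1) being essentially immediate and (2) being the substantive part requiring reflexivity.

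For (1), fix $x\in S_X$. Hahn--Banach supplies a support functional $x^*\in S_{X^*}$ with $x^*(x)=1$, and the almost LUR hypothesis then upgrades this to strong exposure of $x$ by $x^*$. But strong exposure trivially implies denting, since the slices $S(B_X,x^*,\alpha)$ all contain $x$ (because $x^*(x)=1=\sup x^*(B_X)$) and, by the definition of strong exposure, can be made to lie inside any prescribed norm neighbourhood of $x$ by choosing $\alpha$ small. Hence every $x\in S_X$ is a denting point, which is the definition of ALUR used here.

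For (2), assume $X$ is reflexive and ALUR. By Theorem~\ref{th: ALURsseR+KK}(1), $X$ is rotund and has the Kadec property (and in particular its sequential Kadec--Klee form). Fix $x\in S_X$ supported by $x^*\in S_{X^*}$ and $\{x_n\}_n\subset B_X$ with $x^*(x_n)\to 1$; we must show $x_n\to x$ in norm. From $1\geq \|x_n\|\geq x^*(x_n)\to 1$ we get $\|x_n\|\to 1$. Reflexivity lets us extract a weakly convergent sub-subsequence from any subsequence, and any such weak limit $x_0$ satisfies $x^*(x_0)=1=\|x_0\|$, so $(x+x_0)/2\in S_X$, whence rotundity forces $x_0=x$. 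This gives $x_n\rightharpoonup x$; normalising to $y_n:=x_n/\|x_n\|\in S_X$ (eventually) and invoking the Kadec--Klee property upgrades this to $y_n\to x$ in norm, and hence $x_n\to x$ in norm. This proves $x$ is strongly exposed by $x^*$, so $X$ is almost LUR.

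The main thing to keep track of is that the ingredients interlock in the right way: reflexivity produces weak limits, rotundity pins them down, and Kadec (in its sequential Kadec--Klee form) upgrades weak convergence to norm convergence. I do not anticipate any essential difficulty beyond this orchestration.
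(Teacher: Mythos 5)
Your proof of (1) is correct and coincides with the paper's one-line argument: strong exposure implies denting, so the characterization of almost LUR via strongly exposed points immediately gives ALUR. For (2) you are also correct, but you take a genuinely different route. The paper first applies Theorem~\ref{th: ALURsseR+KK}(i) to get rotundity and the Kadec property, then invokes Theorem~\ref{th: ALURsseR+KK}(ii) (Yost's result that a reflexive, rotund space with the Kadec property has a Fr\'echet smooth dual), and finally uses the \v Smulyan Lemma to conclude that every support functional strongly exposes its support point. You instead bypass the dual space entirely: starting from $x^*(x_n)\to 1$ you deduce $\|x_n\|\to 1$, use reflexivity (Eberlein--\v Smulian) to extract weak cluster points, pin each one down to $x$ via rotundity (the midpoint $(x+x_0)/2$ would otherwise be a non-extreme point of $S_X$), and then upgrade $x_n\rightharpoonup x$ to norm convergence by normalising and applying the sequential Kadec--Klee consequence of the Kadec property. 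Your argument is more elementary and self-contained --- it does not need Yost's theorem or any smoothness of the dual --- at the cost of being longer; the paper's proof is shorter but leans on two nontrivial external results. Both establish the same characterization of almost LUR (strong exposure by every support functional), so the proofs are interchangeable.
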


\begin{proof}
    (i) It follows by the fact that if $x\in S_X$ is strongly exposed by $x^*\in X^*$, then $x$ is a denting point of $B_X$.\\
        (ii) By Theorem \ref{th: ALURsseR+KK} (i) we get that $X$ is rotund and has the Kadec property. Therefore, by applying Theorem \ref{th: ALURsseR+KK} (ii), $X^*$ is Fr\'echet smooth. Let $x\in S_X$ and $x^* \in \D_X(x)$. By the \v Smulyan Lemma, $x$ is strongly exposed by $x^*$, hence $X$ is almost LUR.
\end{proof}

Finally, we provide an equivalent norm on $\ell_1$ that is ALUR but not almost LUR. Our example is based on \cite[Example 2.2]{Tro85}, in which it is proved that there exists an ALUR norm which is not LUR. We just show that such a norm is not almost LUR.

\begin{example}
Let $X=\ell_1$ endowed with the norm 
\begin{equation*}
    \|(x_n)_n\|=\sum_{n=1}^{\infty} |x_n| + \left(\sum_{n=1}^{\infty}\frac{x_n^2}{n^2}\right)^{1/2}.
\end{equation*}
The equivalent norm $\|\cdot\|$ is ALUR, see \cite[Example 2.2]{Tro85}. Let us show that $\|\cdot\|$ is not almost LUR. Let $x:=\frac{e_1}{2}\in S_X$, where $\{e_n\}_n$ is the standard Schauder basis of $\ell_1$. Let $x^*=(x^*_n)_n\in (\ell_{\infty},\|\cdot\|^*)$ be defined by
\begin{equation*}
    x^*_n=\begin{cases}
2 \,\,\, \text{ if } n=1,\\
1 \,\,\, \text{ otherwise}.
    \end{cases}
\end{equation*}
We claim that $\|x^*\|^*=1$. Indeed, let $y=(y_n)_n\in S_X$. We have
\begin{equation*}
    |x^*(y)|\leq 2|y_1| + \sum_{n=2}^{\infty}|y_n|=\sum_{n=1}^{\infty}|y_n| + |y_1| \leq \|y\|.
\end{equation*}
Moreover we have $x^*(x)=1$. Therefore, $\|x^*\|^*=1$ and $x\in S_X$ is supported by $x^*$. It remains to show that $x$ is not strongly exposed by $x^*$. Let $\delta>0$. For sufficiently large $n\in \N$, both $x$ and $(\frac{n}{1+n})e_n\in S_X$ belongs to the slice $S(B_X,x^*,\delta)$. Observing that $\|x - (\frac{n}{1+n})e_n\|>1/2$, we get that $x$ is not strongly exposed by $x^*$, hence $\|\cdot\|$ is not almost LUR.
\end{example}

\section{Main results}\label{sec: main result}

This section is devoted to construct an equivalent norm in any separable infinite-dimensional Banach space, which is ALUR, G\^ateaux smooth but not LUR. Before defining such a norm, we need some preliminary constructions and technical assumptions.
In the sequel of this section, $X$ denotes a separable Banach space. We shall need the following result, that essentially collects some well-known results about renorming of separable Banach spaces and existence of M-bases with additional properties. For the sake of completeness we include a sketch of the proof.

\begin{theorem}\label{th: TroPel}
               There exist an equivalent norm $|\!|\!|\cdot|\!|\!|$ and an  M-basis   $(e_n, g_n)_{n\in\N}$ on $X$ such that:
    \begin{enumerate}
        \item $|\!|\!|\cdot|\!|\!|$ is LUR and G\^ateaux smooth;
        \item for every $x\in X$ we have\begin{equation}\label{eq: non ellisse in e_1}
|\!|\!|x|\!|\!|^2=|\!|\!|x- g_1(x)e_1|\!|\!|^2 + [g_1(x)]^2; 
\end{equation}
\item $|\!|\!|e_n|\!|\!|=1$, whenever $n\in\N$;
\item $|\!|\!|g_{1}|\!|\!|^*=|\!|\!|g_{3n}|\!|\!|^*=1$, whenever $n\in\N$.
    \end{enumerate}  
\end{theorem}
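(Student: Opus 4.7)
The plan is to build $|\!|\!|\cdot|\!|\!|$ as an $\ell_2$-type sum along the direction $e_1$, which automatically encodes (ii), and to exploit classical renorming results on the complementary subspace $Y:=\ker g_1$ to obtain (i).

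First, I would fix a bounded M-basis $(e_n,g_n)_{n\in\N}$ in $X$ (which exists by Pe\l czy\'nski's theorem, see \cite{HMVZ}), set $Y:=\ker g_1=\overline{\mathrm{span}}\{e_n\}_{n\geq 2}$, and fix an equivalent LUR and G\^ateaux smooth norm $\|\cdot\|_Y$ on $Y$ (which exists by the Kadec--Troyanski theorem, cf.~\cite{DGZ}). Then I define
\[
|\!|\!|x|\!|\!|^2:=[g_1(x)]^2+\|x-g_1(x)e_1\|_Y^2,\qquad x\in X,
\]
so that (ii) is immediate. Since $|\!|\!|\cdot|\!|\!|^2$ is the $\ell_2$-sum of the trivially LUR and G\^ateaux smooth norm $t\mapsto|t|$ on the first coordinate with $\|\cdot\|_Y$ on $Y$, a routine verification (separating the two summands along a converging sequence) shows that $|\!|\!|\cdot|\!|\!|$ is itself LUR and G\^ateaux smooth, giving (i).

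Next I would take care of the normalizations. For (iii), rescale the M-basis for $n\geq 2$ by $e_n\mapsto e_n/\|e_n\|_Y$ and $g_n\mapsto \|e_n\|_Y g_n$; this yields $|\!|\!|e_n|\!|\!|=1$ (and $|\!|\!|e_1|\!|\!|=1$ is automatic from (ii)). The equality $|\!|\!|g_1|\!|\!|^*=1$ is immediate since $|g_1(x)|\leq|\!|\!|x|\!|\!|$ with equality at $e_1$. The remaining condition $|\!|\!|g_{3n}|\!|\!|^*=1$, since $g_{3n}$ vanishes on $e_1$, reduces to $\|g_{3n}|_Y\|_Y^*=1$, which after the rescaling is the Auerbach-type requirement that $g_{3n}$ attain its norm at $e_{3n}/\|e_{3n}\|_Y$.

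The main obstacle is arranging this Auerbach condition at infinitely many positions while keeping $\|\cdot\|_Y$ LUR and G\^ateaux smooth. I would handle it by constructing the M-basis and the norm in tandem: fix a countable dense set of directions $(v_n)\subset Y$ together with Hahn--Banach functionals $\psi_n\in Y^*$ attaining norm at $v_n$ with $\psi_n(v_n)=\|v_n\|=\|\psi_n\|=1$, assign these to the slots $(e_{3n},g_{3n}):=(v_n,\psi_n)$, complete to a full M-basis by the standard Markushevich interleaving procedure (cf.~\cite[Ch.~1]{HMVZ}), and build the LUR G\^ateaux smooth norm $\|\cdot\|_Y$ by a Troyanski-type formula tuned to the given $(\psi_n)$ so that each $\psi_n$ still attains its norm at $v_n$ in the final norm. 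All remaining conditions of the theorem then follow from the $\ell_2$-sum formula.
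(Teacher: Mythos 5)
Your overall architecture --- the $\ell_2$-sum along $e_1$, which gives (ii) for free and transfers LUR and G\^ateaux smoothness by a routine splitting argument --- is exactly the paper's, and items (i)--(iii) together with $|\!|\!|g_1|\!|\!|^*=1$ are handled correctly. The gap sits precisely where you locate ``the main obstacle'': producing an M-basis with $|\!|\!|e_n|\!|\!|=1$ for all $n$ \emph{and} $|\!|\!|g_{3n}|\!|\!|^*=1$ for all $n$ relative to a LUR G\^ateaux smooth norm. The paper resolves this in the opposite order from yours: it first fixes the LUR G\^ateaux smooth norm $|\!|\!|\cdot|\!|\!|_1$ and then invokes a known result (\cite[Proposition~8.13]{S2}), which for any prescribed equivalent norm on a separable space yields an M-basis normalized at every position and Auerbach along a prescribed infinite subsequence; condition (iv) then follows from a two-line estimate. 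You instead propose to fix the pairs $(v_n,\psi_n)$ first and then ``tune'' a Troyanski-type LUR renorming so that each $\psi_n$ still attains its norm at $v_n$. That step is unsupported: preserving infinitely many prescribed norm-attainment identities $\psi_n(v_n)=\|\psi_n\|^*\,\|v_n\|$ through a LUR and G\^ateaux smooth renorming is exactly the nontrivial content here, the standard Kadec--Troyanski construction does not provide it, and no argument is offered.

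There is also a concrete inconsistency in the data you start from: you take $(v_n)$ to be a \emph{dense} set of directions in $Y$ and then set $(e_{3n},g_{3n}):=(v_n,\psi_n)$. A dense sequence of unit vectors cannot be part of a biorthogonal system: if $\psi_m(v_n)=0$ for all $n\neq m$, then $\{v_n\}_{n\neq m}$ is still dense in the unit sphere of $Y$, hence spans a dense subspace, forcing $\psi_m=0$. Density of the $e_{3n}$ is neither needed (fundamentality is required of the whole system, not of this subsequence) nor compatible with biorthogonality, so the proposed ``interleaving'' completion cannot begin from the data you specify. Replacing this entire step by the normalization result the paper cites (or by an actual construction of a partially Auerbach M-basis adapted to a previously fixed LUR G\^ateaux smooth norm) is what is needed to close the argument.
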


\begin{proof}
Let $|\!|\!|\cdot|\!|\!|_1$ be a  LUR and G\^ateaux smooth equivalent norm on $X$ (see e.g. \cite[Theorem~8.2]{FHHMZ}). By \cite[Proposition~8.13]{S2}, 
there exists an   M-basis   $(e_n, g_n)_{n\in\N}$ on $X$ such that:
    \begin{itemize}
\item $|\!|\!|e_n|\!|\!|_1=1$, whenever $n\in\N$;
\item $|\!|\!|g_{3n}|\!|\!|_1^*=1$, whenever $n\in\N$.
    \end{itemize}
    Let us define an equivalent norm $|\!|\!|\cdot|\!|\!|$ on $X$ by  $$|\!|\!|x|\!|\!|^2=|\!|\!|x- g_1(x)e_1|\!|\!|_1^2 + [g_1(x)]^2,\qquad x\in X.$$
It remains to prove (i)-(iv). Since $|\!|\!|\cdot|\!|\!|_1$ is G\^ateaux smooth then by definition $|\!|\!|\cdot|\!|\!|$ is G\^ateaux smooth too. Let us prove that $|\!|\!|\cdot|\!|\!|$ is  LUR: let $\{x_n\}_{n}\subset X$ and $x\in X$ be such that $\lim_{n} 2|\!|\!|x|\!|\!|^2 + 2 |\!|\!|x_n|\!|\!|^2 - |\!|\!|x + x_n|\!|\!|^2=0$, in other words
\begin{equation*}
\begin{split}
    \lim_{n}\,\,\,& 2|\!|\!|x - g_1(x)e_1|\!|\!|_1^2 + 2 [g_1(x)]^2 + 2|\!|\!|x_n - g_1(x_n)e_1|\!|\!|_1^2 + 2 [g_1(x_n)]^2 \\&- |\!|\!|x + x_n - g_1(x + x_n)e_1|\!|\!|_1^2 + [g_1(x + x_n)]^2=0.
\end{split}
\end{equation*}
Since 
\begin{equation*}
\begin{split}
     &2|\!|\!|x - g_1(x)e_1|\!|\!|_1^2 + 2|\!|\!|x_n - g_1(x_n)e_1|\!|\!|_1^2- |\!|\!|x + x_n - g_1(x + x_n)e_1|\!|\!|_1^2\geq 0,\\
     & 2 [g_1(x)]^2 + 2 [g_1(x_n)]^2  - [g_1(x + x_n)]^2\geq 0,
\end{split}
\end{equation*}
we have
\begin{equation*}
\begin{split}
     &\lim_n\,\,\,2|\!|\!|x - g_1(x)e_1|\!|\!|_1^2 + 2|\!|\!|x_n - g_1(x_n)e_1|\!|\!|_1^2- |\!|\!|x + x_n - g_1(x + x_n)e_1|\!|\!|_1^2= 0,\\
     &\lim_n\,\,\, 2 [g_1(x)]^2 + 2 [g_1(x_n)]^2  - [g_1(x + x_n)]^2= 0,
\end{split}
\end{equation*}
Therefore, by using the fact that $|\!|\!|\cdot|\!|\!|_1$ is LUR, we obtain $$\lim_n [x_n- g_1(x_n)e_1]=x- g_1(x)e_1,\qquad \lim_n g_1(x_n)=g_1(x).$$ Hence $\lim_n|\!|\!|x- x_n|\!|\!|=0$, and (i) follows.\\
For $x\in X$, we have 
\begin{equation*}
\begin{split}
|\!|\!|x - g_1(x)e_1|\!|\!|^2&=|\!|\!|x- g_1(x)e_1 - g_1(x-g_1(x)e_1)e_1|\!|\!|_1^2 + [g_1(x-g_1(x)e_1)]^2\\
&=|\!|\!|x - g_1(x)e_1|\!|\!|^2_1,
\end{split}
\end{equation*}
so (ii) follows. (iii) is trivial. For (iv), let $x\in X$ be such that $|\!|\!|x|\!|\!|=1$. Then we have $1=|\!|\!|x|\!|\!|^2=|\!|\!|x- g_1(x)e_1|\!|\!|_1^2 + [g_1(x)]^2$, from which we get $$|g_1(x)|\leq \sqrt{1-|\!|\!|x- g_1(x)e_1|\!|\!|_1^2}\leq 1.$$ Combining (iii) with $g_1(e_1)=1$, we get $|\!|\!|g_1|\!|\!|^*=1$. Finally, let $n\in \N$ and $x\in X$ be such that $|\!|\!|x|\!|\!|\leq 1$, then, since $|\!|\!|g_{3n}|\!|\!|_1^*=1$, we have
\begin{equation*}
    \frac{|g_{3n}(x)|^2}{|\!|\!|x|\!|\!|^2}=\frac{|g_{3n}(x-g_1(x)e_1)|^2}{|\!|\!|x-g_1(x)e_1|\!|\!|^2_1+[g_1(x)]^2}\leq \frac{|g_{3n}(x-g_1(x)e_1)|^2}{|\!|\!|x-g_1(x)e_1|\!|\!|^2_1}\leq 1.
\end{equation*}
Combining (iii) with $g_{3n}(e_{3n})=1$ we obtain (iv).
\end{proof}

In the sequel of this section, let us denote by $|\!|\!|\cdot|\!|\!|$ and $(e_n, g_n)_{n\in\N}$ the equivalent norm and the M-basis on $X$, respectively, given by Theorem~\ref{th: TroPel}. 
Let us denote by $P_1$  the bounded projection onto the one dimensional space generated by $e_1$ defined by $P_1(x)=g_1(x)e_1$ ($x\in X$), and put $Q_1:=I - P_1$.
Then, \eqref{eq: non ellisse in e_1} implies that $Q_1 [B_{(X,|\!|\!|\cdot|\!|\!|)}]\subset B_{(X,|\!|\!|\cdot|\!|\!|)}$.

We consider the linear operator $T\colon (\ell_2,\|\cdot\|_2) \to (X,|\!|\!|\cdot|\!|\!|) $ defined by 
\[  T\alpha=\sqrt{2}\alpha_1e_1+\sum_{n=2}^{\infty} \frac{1}{n^2} \alpha_n e_n,\]
where $\alpha=(\alpha_n)_n\in\ell_2$. Observing that $\sum_{n=2}^\infty \frac{1}{n^2}<1$, we have $T[R_1B_{\ell_2}]\subset U_{(X,|\!|\!|\cdot|\!|\!|)}$, where $R_1B_{\ell_2}:=\{\alpha=(\alpha_n)_n\in B_{\ell_2}\colon \alpha_1=0\}$. We notice that the operator $T$ is well-defined, bounded, linear, one-to-one, and the range $Y:=T\ell_2$ contains $\{e_n\}_{n}$, therefore $Y$ is dense in $X$. By the injectivity of the operator $T$, we can consider the subspace $Y$ endowed with the norm $\|\cdot\|_{\theta}$ defined by $\|y\|_{\theta}:=\|T^{-1}y\|_2$, for any $y\in Y$. In this way, we obtain that $T$ is an isometric isomorphism between $(\ell_2,\|\cdot\|_2)$ and $(Y,\|\cdot\|_{\theta})$. We set $B:=T[B_{\ell_2}]$. In other words, we have that
\begin{equation*}
    B=\{y\in Y\colon \|y\|_{\theta}\leq 1\}.
\end{equation*}
Similarly, we define
\begin{align*}
    U=\{y\in Y\colon \|y\|_{\theta}< 1\},\\
    S=\{y\in Y\colon \|y\|_{\theta}= 1\}.
\end{align*}
We claim that the convex subset $B$ is compact in $(X,|\!|\!|\cdot|\!|\!|)$. Indeed, the operator $T$ is bounded, therefore it is \textit{w-w}-continuous, hence $B$ is closed in $(X,|\!|\!|\cdot|\!|\!|)$. In order to prove the claim, it is enough to observe that $T$ is a compact operator. Indeed, $T$ is the limit in $\mathcal{K}(\ell_2, X)$ of the sequence of finite-rank operators $\{T_k\}_k$, defined by
\begin{equation*}
T_k \alpha= \sqrt{2}\alpha_1 e_1 +\sum_{n=2}^k\frac{1}{n^2}\alpha_n e_n,
\end{equation*}
where $\mathcal{K}(\ell_2, X)$ is the space of all compact operators from $\ell_2$ to $X$, which is a closed subset of the space of all bounded operators from $\ell_2$ to $X$ (see e.g. \cite[Proposition~1.40]{FHHMZ}). 
By the claim, $B$ is a convex compact set in $(X,|\!|\!|\cdot|\!|\!|)$ and hence we have that the set  $$D=\conv\bigl(B_{{(X,|\!|\!|\cdot|\!|\!|)}}\cup B\bigr)$$
	is closed in ${{X}}$.  
	Then by our definition and by symmetry,  $D$ is the closed unit ball of an equivalent norm $\|\cdot\|$ on ${X}$. Unfortunately such a norm is not rotund, therefore we need a further step. Define $f_n={g_n}/{|\!|\!|g_{n}|\!|\!|^*}$ ($n\in\N$), and consider the equivalent norm $|\cdot|$ on ${X}$ defined  by
\begin{equation}\label{eq: definizione norma}	
 F\bigl(x\bigr):=|x|^2=\|x\|^2+\sum_{n=2}^\infty2^{-n}f_n(x)^2.
 \end{equation}
	 In order to prove the main theorem of this section we need several technical lemmas that describe the geometry of the norm $|\cdot|$. The next results are stated by using the same notation as in the first part of this section.

	\begin{lemma}\label{lemma: U nell'interno}
		We have $U\cap \partial D=\emptyset$, equivalently $B\cap\partial D\subset S$, equivalently $U\subset \inte D$.	
	\end{lemma}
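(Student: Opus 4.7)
The plan is to establish the third (equivalent) formulation, namely $U \subset \inte D$. The idea is that for any $y \in U$, one can slightly enlarge $y$ inside $B$ and then invoke elementary convex geometry via Lemma \ref{l: segmento interno}.

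More precisely, I would fix $y \in U$ and write $y = T\alpha$ for some $\alpha \in \ell_2$ with $\|\alpha\|_2 < 1$ (recall that $T$ is an isometry from $(\ell_2,\|\cdot\|_2)$ onto $(Y,\|\cdot\|_\theta)$). If $\alpha = 0$, then $y = 0$ lies in $\inte B_{(X,|\!|\!|\cdot|\!|\!|)}$, hence in $\inte D$, and there is nothing to prove. Otherwise, I choose any $\lambda$ in the nonempty interval $(1, 1/\|\alpha\|_2)$ and observe that $\|\lambda\alpha\|_2 < 1$, so $\lambda y = T(\lambda \alpha) \in B \subset D$.

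Now $0 \in \inte B_{(X,|\!|\!|\cdot|\!|\!|)} \subset \inte D$, since $B_{(X,|\!|\!|\cdot|\!|\!|)}$ is the unit ball of a norm and is contained in $D$. Writing $y = \tfrac{1}{\lambda}(\lambda y) + (1 - \tfrac{1}{\lambda}) \cdot 0$ with $\tfrac{1}{\lambda} \in (0,1)$, the point $y$ sits in the open segment from $0$ to $\lambda y$. If $\lambda y \in \partial D$, Lemma \ref{l: segmento interno} applied with $C = D$ yields $[0, \lambda y) \subset \inte D$, and in particular $y \in \inte D$; if instead $\lambda y \in \inte D$, the convexity of $\inte D$ gives the same conclusion at once.

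I do not anticipate any real obstacle here: the argument uses only that $U = T[U_{\ell_2}]$ is starlike about $0$ with a strict bit of slack, which lets one push $y$ slightly outward while remaining inside $B$. The finer structural features of the setup — the particular form of $|\!|\!|\cdot|\!|\!|$ coming from Theorem \ref{th: TroPel}, the M-basis $(e_n, g_n)$, and the norm $|\cdot|$ defined by \eqref{eq: definizione norma} — are not needed for this statement, and will presumably enter only in later lemmas dealing with the geometry of $S \cap \partial D$ and the rotundity and smoothness properties of $|\cdot|$.
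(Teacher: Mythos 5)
Your argument is correct, but it is genuinely different from the one in the paper. You scale radially from the origin: given $y=T\alpha\in U$ with $\alpha\neq 0$, you push $y$ outward to $\lambda y\in B\subset D$ for some $\lambda>1$ with $\lambda\|\alpha\|_2<1$, note that $0\in\inte B_{(X,|\!|\!|\cdot|\!|\!|)}\subset\inte D$, and conclude via Lemma~\ref{l: segmento interno} (or convexity of the interior, according to whether $\lambda y$ lies on $\partial D$ or in $\inte D$) that $y\in[0,\lambda y)\subset\inte D$. Since $D$ is closed, so that $D=\inte D\cup\partial D$, your case split is exhaustive, and the three formulations of the statement are indeed equivalent because $U\subset B\subset D$ and $B=U\cup S$. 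The paper instead perturbs along the $e_1$ direction: starting from a hypothetical $x\in U\cap\partial D$, it uses the structural inclusion $T[R_1B_{\ell_2}]\subset U_{(X,|\!|\!|\cdot|\!|\!|)}\subset\inte D$ to place $Q_1x\in\inte D$, then slides along $e_1$ to a point $y\in S$ with $x\in[Q_1x,y)$, and applies Lemma~\ref{l: segmento interno} to reach a contradiction. Your route is more elementary --- it needs only that $0\in\inte D$ and that $U$ is starlike about $0$ with slack, and it does not invoke the decomposition $x=g_1(x)e_1+Q_1x$ or property \eqref{eq: non ellisse in e_1} at all --- whereas the paper's choice of the $e_1$ direction foreshadows the role that direction plays in Lemma~\ref{lemma: no segmenti orizzontali} and in the failure of LUR. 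Both proofs are complete and correct.
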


\begin{proof}
		Suppose on the contrary that there exists $x\in U\cap \partial D$.  Combining that $T[R_1B_{\ell_2}]\subset U_{(X,|\!|\!|\cdot|\!|\!|)}\subset \inte D$ with the fact that $\|x\|_{\theta}=\|T^{-1}x\|_2<1$, there exists $y_1\in \R$ such that:
  \begin{itemize}
      \item $y:=y_1 e_1 + Q_1x\in S$ (which is equivalent to $\|y\|_{\theta}=1$);
      \item $x\in [Q_1x,y)$.
  \end{itemize} Since $y\in S\subset D$ and $Q_1x\in\inte D$, by Lemma \ref{l: segmento interno} we get $x\in [Q_1x,y)\subset \inte D$, that is a contradiction. 
	\end{proof}

\begin{lemma}\label{lemma: bordo da bordi} Let $x\in\partial D$. Then there exist $\lambda\in[0,1]$, $b\in S_{{(X,|\!|\!|\cdot|\!|\!|)}}$, and $c\in S$ such that $x=\lambda b+(1-\lambda)c$.
	\end{lemma}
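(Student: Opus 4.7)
The approach is to exploit the fact that $D$ is the convex hull of two convex sets, and then use Lemma~\ref{lemma: U nell'interno} to pin down where the two convex-combination components must sit.

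First I would invoke the elementary fact that for nonempty convex subsets $A_1,A_2$ of a real vector space,
\[
\conv(A_1\cup A_2)=\bigl\{\lambda a_1+(1-\lambda)a_2 : a_1\in A_1,\ a_2\in A_2,\ \lambda\in[0,1]\bigr\}.
\]
Applied to $A_1=B_{(X,|\!|\!|\cdot|\!|\!|)}$ and $A_2=B$, this yields, for the given boundary point $x\in\partial D\subset D$, a representation $x=\lambda b+(1-\lambda)c$ with $b\in B_{(X,|\!|\!|\cdot|\!|\!|)}$, $c\in B$ and $\lambda\in[0,1]$. The rest of the proof then amounts to showing that one can upgrade $b\in B_{(X,|\!|\!|\cdot|\!|\!|)}$ to $b\in S_{(X,|\!|\!|\cdot|\!|\!|)}$ and $c\in B$ to $c\in S$.

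The degenerate cases are handled directly. If $\lambda=0$ then $x=c\in B\cap\partial D\subset S$ by Lemma~\ref{lemma: U nell'interno}, so any $b\in S_{(X,|\!|\!|\cdot|\!|\!|)}$ (e.g.\ $b=e_1$, using Theorem~\ref{th: TroPel}(iii)) completes the decomposition. If $\lambda=1$ then $x=b\in B_{(X,|\!|\!|\cdot|\!|\!|)}\cap\partial D$; since $\inte B_{(X,|\!|\!|\cdot|\!|\!|)}\subset \inte D$, this forces $b\in S_{(X,|\!|\!|\cdot|\!|\!|)}$, and any $c\in S$ suffices.

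In the main case $\lambda\in(0,1)$ I would argue by contradiction on each component. If $b\in\inte B_{(X,|\!|\!|\cdot|\!|\!|)}$, pick $r>0$ with $b+rU_{(X,|\!|\!|\cdot|\!|\!|)}\subset B_{(X,|\!|\!|\cdot|\!|\!|)}$; then
\[
\lambda\bigl(b+rU_{(X,|\!|\!|\cdot|\!|\!|)}\bigr)+(1-\lambda)c=x+\lambda r\,U_{(X,|\!|\!|\cdot|\!|\!|)}\subset D,
\]
contradicting $x\in\partial D$. Analogously, if $c\in U$, then by Lemma~\ref{lemma: U nell'interno} we have $c\in \inte D$, so a small $|\!|\!|\cdot|\!|\!|$-ball around $c$ is contained in $D$, and the same convex-combination trick places $x$ in $\inte D$, again a contradiction. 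Hence $b\in S_{(X,|\!|\!|\cdot|\!|\!|)}$ and $c\in S$, as required. The only mildly delicate point is verifying the convex-hull decomposition formula (which would fail for arbitrary non-convex $A_i$); once that is in hand, the three-case analysis driven by Lemma~\ref{lemma: U nell'interno} presents no genuine obstacle.
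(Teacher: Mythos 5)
Your proof is correct and follows essentially the same route as the paper: both arguments start from the representation of points of $D=\conv\bigl(B_{(X,|\!|\!|\cdot|\!|\!|)}\cup B\bigr)$ as convex combinations $\lambda b+(1-\lambda)c$ and then use Lemma~\ref{lemma: U nell'interno} (together with $U_{(X,|\!|\!|\cdot|\!|\!|)}\subset\inte D$) to force both components onto the respective spheres, the paper phrasing this as a five-piece covering of $D$ with four pieces shown to lie in $\inte D$, while you phrase it as a contradiction via Minkowski sums with small open balls. The two arguments are interchangeable in substance.
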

	
	\begin{proof} Let us observe that $D$ is the union of the following sets: 
		$$\conv(S_{{(X,|\!|\!|\cdot|\!|\!|)}}\cup S),\quad U_{{(X,|\!|\!|\cdot|\!|\!|)}},\quad U,$$
		$$\bigcup_{\mu\in(0,1)}[\mu B_{{(X,|\!|\!|\cdot|\!|\!|)}}+(1-\mu)U],\quad  \bigcup_{\mu\in(0,1)}[\mu U_{{(X,|\!|\!|\cdot|\!|\!|)}}+(1-\mu)B].$$
		We observe that  $U_{{(X,|\!|\!|\cdot|\!|\!|)}}\subset \inte D$, by Lemma~\ref{lemma: U nell'interno}, $U\subset \inte D$, and by Lemma \ref{l: segmento interno},  the sets
		$$\quad \bigcup_{\mu\in(0,1)}[\mu B_{{(X,|\!|\!|\cdot|\!|\!|)}}+(1-\mu)U],\quad  \bigcup_{\mu\in(0,1)}[\mu U_{{(X,|\!|\!|\cdot|\!|\!|)}}+(1-\mu)B].$$
		are contained in $\inte D$. Hence necessarily $x\in \conv(S_{{(X,|\!|\!|\cdot|\!|\!|)}}\cup S)$, and the conclusion holds. 
	\end{proof}

\begin{lemma}\label{lemma: no segmenti orizzontali}
 If $x\in\partial D$, then at least one of the following conditions is satisfied:
		\begin{enumerate}
			\item $x+te_1\not\in D$, whenever $t>0$;
			\item $x-te_1\not\in D$, whenever $t>0$.
		\end{enumerate}
In other words, the set $\partial D$ does not contain segments parallel to $e_1$. 
	\end{lemma}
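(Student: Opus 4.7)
The plan is to argue by contradiction. Suppose there exist $s, t > 0$ with $y_+ := x + t e_1 \in D$ and $y_- := x - s e_1 \in D$. Since $x$ is a proper convex combination of $y_\pm$, Lemma \ref{l: segmento interno} would force $y_\pm \in \partial D$ (and in fact the full open segment $(y_-, y_+) \subset \partial D$): otherwise $x$ itself would lie in $\inte D$. I would then invoke Hahn--Banach to pick $x^* \in X^* \setminus \{0\}$ with $x^*(x) = \sup_D x^* = 1$. Since $x$ is a convex combination of $y_\pm \in D$ and $x^*(y_\pm) \leq 1$, both must equal $1$, whence $x^*(e_1) = (x^*(y_+) - x^*(y_-))/(t+s) = 0$ and $x^* \equiv 1$ on $[y_-, y_+]$.

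The core step is the following key claim: \emph{whenever $x^*$ attains the value $1$ on $B_{(X,|\!|\!|\cdot|\!|\!|)}$ or on $B$, the maximizer is unique and has vanishing first coordinate.} For $B_{(X,|\!|\!|\cdot|\!|\!|)}$, the unique maximizer $b^*$ exists (if at all) by strict convexity of the LUR norm $|\!|\!|\cdot|\!|\!|$, and the reflection $b^* \mapsto b^* - 2g_1(b^*)e_1$ preserves both the norm (by \eqref{eq: non ellisse in e_1}) and the value of $x^*$ (since $x^*(e_1) = 0$); uniqueness then forces $g_1(b^*) = 0$. For $B$, compactness guarantees that the supremum is attained, and since $T$ is an isometric isomorphism onto $(Y, \|\cdot\|_\theta)$ and $\ell_2$ is strictly convex, the maximizer is the unique $c^* = T(T^*x^*/\|T^*x^*\|_2)$; the first $\ell_2$-coordinate of $T^*x^*$ equals $\sqrt{2}\, x^*(e_1) = 0$ (because $T$ sends the first unit vector of $\ell_2$ to $\sqrt{2}\, e_1$), so $g_1(c^*) = 0$.

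To close, I would use the decomposition $z = \lambda b + (1-\lambda) c$ with $b \in B_{(X,|\!|\!|\cdot|\!|\!|)}$, $c \in B$, $\lambda \in [0,1]$, valid since $D = \conv(B_{(X,|\!|\!|\cdot|\!|\!|)} \cup B)$. For any $z \in (y_-, y_+)$ the identity $x^*(z) = 1$ combined with $x^*(b), x^*(c) \leq 1$ gives
\[
\lambda(1 - x^*(b)) + (1-\lambda)(1 - x^*(c)) = 0
\]
with both summands nonnegative, so each vanishes. A short case analysis (on which of $\lambda \in \{0,1\}$ holds, or whether both $x^*(b) = x^*(c) = 1$) forces $b = b^*$ and/or $c = c^*$, placing $z$ on the segment $[b^*, c^*]$ (degenerating to a single point in the cases where $\sup_{B_{(X,|\!|\!|\cdot|\!|\!|)}} x^* < 1$ or $\sup_B x^* < 1$). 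By the key claim $g_1 \equiv 0$ on $[b^*, c^*]$, while $g_1(x + re_1) = g_1(x) + r$ is nonconstant for $r \in (-s, t)$, contradicting $t + s > 0$. I expect the main obstacle to be the vanishing-$g_1$ half of the key claim, which is precisely where the tailored shape of $|\!|\!|\cdot|\!|\!|$ through \eqref{eq: non ellisse in e_1} and the privileged weight $\sqrt{2}$ on the $e_1$-component of $T$, both engineered earlier in the section, pay off.
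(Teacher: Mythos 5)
Your proof is correct, but it takes a genuinely different route from the paper's. The paper argues purely on the primal side with the projection $Q_1=I-P_1$: from \eqref{eq: non ellisse in e_1} and $Q_1B=T[R_1B_{\ell_2}]$ it deduces that $Q_1(D)\subset B_{(X,\norma{\cdot})}$ and that $Q_1$ maps every point of $D$ not fixed by $Q_1$ into $U_{(X,\norma{\cdot})}$; then either $x=Q_1x\in S_{(X,\norma{\cdot})}$, in which case $x\pm te_1$ would project onto a norm-one point that should lie in the open ball, or $Q_1x\in\inte D$, in which case a hypothetical $x+te_1\in D$ puts $x$ on the half-open segment $[Q_1x,\,x+te_1)$ and Lemma~\ref{l: segmento interno} forces $x\in\inte D$. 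That argument is short and needs neither Hahn--Banach nor any rotundity of $\norma{\cdot}$. Your argument is dual: you produce a supporting functional, show it annihilates $e_1$, and then use uniqueness of maximizers on each of the two balls --- rotundity of $\norma{\cdot}$ combined with the reflection $v\mapsto v-2g_1(v)e_1$ (a $\norma{\cdot}$-isometry precisely because of \eqref{eq: non ellisse in e_1}) for $B_{(X,\norma{\cdot})}$, and the Hilbert structure of $(Y,\|\cdot\|_\theta)$ together with $x^*(Te_1^{\ell_2})=\sqrt2\,x^*(e_1)=0$ for $B$ --- to conclude that the whole face cut out by the functional sits inside $\ker g_1$, which a nondegenerate segment parallel to $e_1$ cannot do. Both proofs isolate the same structural fact engineered in Theorem~\ref{th: TroPel} and in the definition of $T$, namely that the $e_1$-direction is ``orthogonal'' to both balls; yours yields the slightly finer conclusion that any face of $D$ determined by a functional vanishing on $e_1$ lies in $\{g_1=0\}$, at the price of invoking strict convexity and a separation argument where the paper needs neither.
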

	
	\begin{proof} 
		Let $x\in\partial D$ and suppose without any loss of generality that $g_1(x)\geq 0$, the case $g_1(x)\leq 0$ is similar.
		Let us observe that, by \eqref{eq: non ellisse in e_1}, we get 
		\begin{equation*}\label{eq: P1}
			Q_1\bigl(B_{{(X,|\!|\!|\cdot|\!|\!|)}}\setminus Q_1(B_{{(X,|\!|\!|\cdot|\!|\!|)}})\bigr)\subset U_{{(X,|\!|\!|\cdot|\!|\!|)}}.
   \end{equation*}
   Moreover, since $Q_1 B= T[R_1B_{\ell_2}]$, we have
    \begin{equation*}   
   Q_1(B)\subset U_{{(X,|\!|\!|\cdot|\!|\!|)}}.
		\end{equation*}
		Hence {$Q_1\bigl(D\bigr)\subset B_{{(X,|\!|\!|\cdot|\!|\!|)}}$ and}  $Q_1\bigl(D\setminus Q_1(B_{{(X,|\!|\!|\cdot|\!|\!|)}})\bigr)\subset U_{{(X,|\!|\!|\cdot|\!|\!|)}}$. Let us denote $z=Q_1(x){\subset B_{{(X,|\!|\!|\cdot|\!|\!|)}}}$ and let us consider the following two cases.
		
		\noindent {\bf Case 1: $|\!|\!|z|\!|\!|=1$.} In this case $x=z$ and, since $Q_1\bigl(D\setminus Q_1(B_{{(X,|\!|\!|\cdot|\!|\!|)}})\bigr)\subset U_{{(X,|\!|\!|\cdot|\!|\!|)}}$, we have that 
		(i) is satisfied.
		
		\noindent {\bf Case 2: $|\!|\!|z|\!|\!|<1$.} First, observe that $z\in\inte D$. Suppose on the contrary that (i) does not hold and let $t>0$ be such that $w:=x+te_1\in D$. Then $x\in[z,w)$ and hence $x\in\inte D$, a contradiction.
  
  Let us conclude the proof by observing that $\partial D$ does not contain segment parallel to $e_1$. Indeed, suppose that $[x,x+te_1]\subset \partial D$ for some $t> 0$, and $x\in \partial D$, then the point $x+ \frac{t}{2}e_1 \in \partial D$ does not satisfy neither (i) nor (ii). Which is a contradiction. 
	\end{proof}

The previous lemmas are necessary for proving that the norm defined in \eqref{eq: definizione norma} is rotund.
	
	\begin{proposition}\label{prop: rotund}
		The norm $|\cdot|$ is rotund.	
	\end{proposition}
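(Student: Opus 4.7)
The plan is to exploit the sum-of-squares structure of $F(x):=|x|^2=\|x\|^2+\sum_{n\geq 2}2^{-n}f_n(x)^2$. Each summand is a convex function of $x$ (the square of a norm, respectively the composition of a linear functional with the strictly convex $t\mapsto t^2$), so $F$ is convex, and the midpoint inequality $F(\tfrac{x+y}{2})\leq\tfrac12 F(x)+\tfrac12 F(y)$ becomes an equality \emph{only if} equality holds in every summand separately. I assume $|x|=|y|=|\tfrac{x+y}{2}|=1$ with $x\neq y$ and derive a contradiction by exploiting this splitting term by term.

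For every $n\geq 2$, linearity of $f_n$ makes $f_n(\cdot)^2$ the composition of a linear functional with $t\mapsto t^2$, and strict convexity of the square forces $f_n(x)=f_n(y)$, i.e., $g_n(x-y)=0$. Setting $w:=(x-y)-g_1(x-y)e_1$, biorthogonality of the M-basis gives $g_1(w)=0$ while $g_n(w)=0$ for every $n\geq 2$, and totality of $(g_n)_n$ in $X^*$ then forces $w=0$. Hence $x-y=te_1$ for some $t\in\R$, and necessarily $t\neq 0$ since $x\neq y$.

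The remaining summand gives $\|x\|^2+\|y\|^2=2\|\tfrac{x+y}{2}\|^2$; combined with convexity of $\|\cdot\|$ and strict convexity of $t\mapsto t^2$, this forces $\|x\|=\|y\|=\|\tfrac{x+y}{2}\|=:s>0$. Hence $u:=x/s$ and $v:=y/s$ lie in $\partial D$, and so does their midpoint $(u+v)/2$. A standard convex-combination argument (any intermediate point of $[u,v]$ can be written as a convex combination of the midpoint and one endpoint, all of $\|\cdot\|$-norm $1$) shows that the whole segment $[u,v]$ lies in $\partial D$. But $u-v=(t/s)e_1$, so this is a nontrivial segment of $\partial D$ parallel to $e_1$, directly contradicting Lemma~\ref{lemma: no segmenti orizzontali}.

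The main obstacle I anticipate is the bookkeeping in the first step: one must justify that equality in the convex midpoint inequality for $F$ really does pass to each summand. This is routine for a nonnegative sum of convex functions, but the infinite tail $\sum_{n\geq 2}2^{-n}f_n(\cdot)^2$ deserves a short remark about uniform domination, together with the observation that $\|\cdot\|$ is a genuine norm (so its square is convex) because $D$ is a bounded, symmetric convex body.
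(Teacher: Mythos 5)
Your argument is correct and follows essentially the same route as the paper's proof: split the equality case of the midpoint (convexity) inequality for $F=|\cdot|^2$ across the summands, deduce $f_n(x)=f_n(y)$ for $n\geq 2$ and $\|x\|=\|y\|=\|\tfrac{x+y}{2}\|$, use totality of the M-basis to conclude $x-y$ is a nonzero multiple of $e_1$, and contradict Lemma~\ref{lemma: no segmenti orizzontali}. You even make explicit two details the paper leaves implicit (that $x-y=te_1$ rather than merely $g_1(x-y)\neq 0$, and the segment-in-the-boundary step), so no further changes are needed.
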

	
	\begin{proof}
		In order to show that $|\cdot|$ is a rotund norm, by \cite[Fact 7.7]{FHHMZ} it is enough to show that if $x,y\in X$ satisfy
\begin{equation}\label{eq: Qonebar}
    2|x|^2 + 2|y|^2 - |x+y|^2=0,
\end{equation}
        then $x=y$. Suppose by contradiction that there are $x,y\in X$, $x\neq y$ that satisfy \eqref{eq: Qonebar}. 
        Observe that, if $p$ is a seminorm on $X$,  we clearly  have 
\begin{equation}\label{eq: seminorm}
2p^2(x) + 2p^2(y) - p^2(x+y)
\geq[p(x)-p(y)]^2\geq0
\end{equation}
        Then, by the definition of $|\cdot|$ 
        we get
\begin{equation}\label{eq: Qtwobars}
2\|x\|^2 + 2\|y\|^2 - \|x+y\|^2=0,
\end{equation}
and
\begin{equation}\label{eq: Qp2}    
    2f_n^2(x)+ 2 f_n^2(y) - (f_n(x+y))^2=0 \qquad\qquad (n\geq 2).
\end{equation}

Now, combining \eqref{eq: Qtwobars} and \eqref{eq: seminorm}, we obtain 

      \begin{equation}\label{eq: twobarsR}
\|x\|=\|y\|=\bigg\|\frac{x+y}{2}\bigg\|.
      \end{equation} 
      Moreover,  by \eqref{eq: Qp2} and the linearity of $f_n$, it follows that $f_n(x)=f_n(y)$ for every $n\geq 2$.       
		 Since $x\neq y$, $(e_n,g_n)_n$ is an M-basis, and $f_n(x)=f_n(y)$ for $n\geq 2$,  we have that $f_1(x)\neq f_1(y)$. Hence, by \eqref{eq: twobarsR}, $\partial D$ contains a nontrivial segment parallel to $e_1$, 
  which contradicts Lemma~\ref{lemma: no segmenti orizzontali}. Therefore $|\cdot|$ is strictly convex.
	\end{proof}

	\begin{proposition}\label{prop: smooth}
		The norm $\|\cdot\|$ is G\^ateaux smooth.	
	\end{proposition}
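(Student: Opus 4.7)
The plan is to prove that $\|\cdot\|$ is G\^ateaux smooth by showing that every $x\in\partial D$ admits a unique supporting functional, which is equivalent to G\^ateaux differentiability of the norm at $x$. Fix such an $x$ and let $\phi\in X^*$ be any functional with $\phi(x)=1=\sup_D \phi$. Since $D\supset B_{(X,|\!|\!|\cdot|\!|\!|)}\cup B$, we have $|\!|\!|\phi|\!|\!|^*\leq 1$ and $\sup_{\alpha\in B_{\ell_2}}\phi(T\alpha)=\|T^*\phi\|_2\leq 1$, where $\ell_2^*$ is identified with $\ell_2$ via Hilbert space self-duality.

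By Lemma~\ref{lemma: bordo da bordi}, I decompose $x=\lambda b+(1-\lambda)c$ with $b\in S_{(X,|\!|\!|\cdot|\!|\!|)}$, $c\in S$, and $\lambda\in[0,1]$. The identity $\lambda\phi(b)+(1-\lambda)\phi(c)=1$ combined with $\phi(b)\leq 1$ and $\phi(c)\leq 1$ forces $\phi(b)=1$ whenever $\lambda>0$, and $\phi(c)=1$ whenever $\lambda<1$; in particular, at least one of these two equalities must hold.

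If $\phi(b)=1$, then $|\!|\!|\phi|\!|\!|^*=1$ and $\phi$ supports $B_{(X,|\!|\!|\cdot|\!|\!|)}$ at $b$, so the G\^ateaux smoothness of $|\!|\!|\cdot|\!|\!|$ provided by Theorem~\ref{th: TroPel}(i) forces $\phi$ to be the unique such functional, hence $\phi$ is determined by $b$. If instead $\phi(c)=1$, then $T^*\phi\in\ell_2$ satisfies $\|T^*\phi\|_2=1$ and attains this value at $T^{-1}c\in S_{\ell_2}$, and the strict convexity of $\ell_2$ yields $T^*\phi=T^{-1}c$. Since $T$ has dense range (as $Y=T\ell_2$ contains $\{e_n\}_n$), the adjoint $T^*\colon X^*\to\ell_2$ is injective, so $\phi$ is uniquely determined by $c$. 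In either case $\phi$ is uniquely determined by $x$, establishing G\^ateaux smoothness of $\|\cdot\|$ at $x$.

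The most delicate point is the second case: since $B$ is not the unit ball of a norm on $X$ but merely a compact convex subset, the argument must go through the Hilbert space model $T\colon\ell_2\to X$. Uniqueness of the norm-one functional supporting $B_{\ell_2}$ at a prescribed point is automatic in $\ell_2^*$ by strict convexity, and the density of $Y$ in $X$ guarantees that $T^*$ is injective, so this uniqueness can be transferred back to $X^*$.
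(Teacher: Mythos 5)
Your proof is correct and follows essentially the same route as the paper: the same decomposition $x=\lambda b+(1-\lambda)c$ from Lemma~\ref{lemma: bordo da bordi}, with uniqueness of the supporting functional coming from G\^ateaux smoothness of $|\!|\!|\cdot|\!|\!|$ when $\lambda>0$ and from the Hilbert-space structure of $(Y,\|\cdot\|_\theta)$ (which you phrase via $T^*$ and injectivity of $T^*$ from density of $Y$) when $\lambda=0$. The only cosmetic differences are that the paper argues by contradiction with two functionals, uses the translated ball $C_\lambda=\lambda B_{(X,|\!|\!|\cdot|\!|\!|)}+(1-\lambda)c$ in the first case, and restricts the functionals to $Y$ rather than passing through the adjoint.
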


	\begin{proof} By the \v Smulyan Lemma, it is sufficient to prove that if $x\in S_{{(X,\|\cdot\|)}}=\partial D$ then there exists a unique $f\in S_{{(X^*,\|\cdot\|^*)}}$ supporting $B_{{(X,\|\cdot\|)}}=D$ at $x$. 
		
		Let 	$x\in\partial D$ and suppose on the contrary that there exist two distinct functionals $h_1,h_2\in S_{{(X^*,\|\cdot\|^*)}}$ such that $h_1(x)=h_2(x)=1$. By Lemma~\ref{lemma: bordo da bordi}, there exist $\lambda\in[0,1]$, $b\in S_{{(X,|\!|\!|\cdot|\!|\!|)}}$, and $c\in S$ such that $x=\lambda b+(1-\lambda)c$.
		Let us consider the following two cases.
		
		\noindent {\bf Case 1: $\lambda>0$.} In this case we proceed as in \cite[Theorem~1.5]{KVZ}: observe that if we define $C_\lambda=\lambda B_{{(X,|\!|\!|\cdot|\!|\!|)}}+(1-\lambda)c$, then $x\in\partial C_\lambda$ and $C_\lambda\subset D$. Hence, $h_1$ and $h_2$ are  functionals supporting $C_\lambda$ at $x$, and we get a contradiction since $|\!|\!|\cdot|\!|\!|$ is  G\^ateaux smooth.

		\noindent {\bf Case 2: $\lambda=0$.} In this case $x=c\in S$ and $h_i(x)=1=\sup h_i(B)$ ($i=1,2$), moreover $h_1|_Y\neq h_2|_Y$, since $ Y$ is a dense subspace of ${{X}}$. Hence,  $h_1|_Y$ and $h_2|_Y$ are distinct $\|\cdot\|_\theta$-continuous functionals (since bounded on $B$) on $Y$ supporting $B$ at $x$, moreover $\|h_1|_Y\|_\theta^*=\|h_2|_Y\|_\theta^*=1$. A contradiction, since $B$ is the closed unit ball of  $(Y,\|\cdot\|_\theta)$, which is a Hilbert space (and hence a G\^ateaux smooth Banach space). 	\end{proof}

\begin{proposition}\label{prop: KP}
		The norm $\|\cdot\|$ has the Kadec property.	
	\end{proposition}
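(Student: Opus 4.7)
The plan is to show that every weakly convergent sequence in $\partial D=S_{(X,\|\cdot\|)}$ is norm convergent, relying on the decomposition provided by Lemma~\ref{lemma: bordo da bordi}, the $\norma{\cdot}$-compactness of $B$ (already established before Lemma~\ref{lemma: U nell'interno}), and the LUR character of $\norma{\cdot}$ from Theorem~\ref{th: TroPel}(i). Fix $\{x_n\}_n\subset\partial D$ and $x\in\partial D$ with $x_n\to x$ weakly; the goal is $\|x_n-x\|\to 0$.

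First I would apply Lemma~\ref{lemma: bordo da bordi} to write $x_n=\lambda_n b_n+(1-\lambda_n)c_n$ with $\lambda_n\in[0,1]$, $b_n\in S_{(X,\norma{\cdot})}$, and $c_n\in S$. Since $B$ is $\norma{\cdot}$-compact and $[0,1]$ is compact, after passing to a subsequence I may assume $\lambda_n\to\lambda\in[0,1]$ and $c_n\to c$ in $\norma{\cdot}$, hence in $\|\cdot\|$. In the easy case $\lambda=0$, the sequence $\{b_n\}_n$ is bounded, so $\lambda_nb_n\to 0$ in norm and $(1-\lambda_n)c_n\to c$, giving $x_n\to c$ in $\|\cdot\|$; comparing with the weak limit forces $c=x$, and we are done.

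The substantial case is $\lambda>0$. Here, for $n$ large, $b_n=\lambda_n^{-1}\bigl(x_n-(1-\lambda_n)c_n\bigr)$ converges weakly to $b:=\lambda^{-1}\bigl(x-(1-\lambda)c\bigr)$, with $\norma{b}\leq 1$ by weak lower semicontinuity, and $x=\lambda b+(1-\lambda)c$. The crucial step is to verify $\norma{b}=1$: if instead $\norma{b}<1$, then $b\in U_{(X,\norma{\cdot})}\subset\inte D$, and since $c\in D$, Lemma~\ref{l: segmento interno} gives $[c,b)\subset\inte D$, so either $\lambda<1$ and $x\in[c,b)\subset\inte D$, or $\lambda=1$ and $x=b\in\inte D$; both contradict $x\in\partial D$.

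Once $\norma{b}=1$, from $b_n+b\to 2b$ weakly I obtain $\liminf_n\norma{b_n+b}\geq 2$, and $\norma{b_n+b}\leq 2$ by the triangle inequality, so $\norma{b_n+b}\to 2$. Combined with $\norma{b_n},\norma{b}\leq 1$, this yields $\lim_n\bigl(2\norma{b_n}^2+2\norma{b}^2-\norma{b_n+b}^2\bigr)=0$, and LUR of $\norma{\cdot}$ gives $b_n\to b$ in $\norma{\cdot}$, hence in $\|\cdot\|$. Combining the three limits yields $x_n\to \lambda b+(1-\lambda)c=x$ in $\|\cdot\|$ along the subsequence. Since every subsequence of $\{x_n\}_n$ admits a further $\|\cdot\|$-convergent-to-$x$ sub-subsequence by the same argument, the full sequence converges, proving the Kadec property. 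The main obstacle is precisely the identification $\norma{b}=1$, where the geometric structure of $D$ encoded in Lemmas~\ref{lemma: U nell'interno} and~\ref{l: segmento interno} is indispensable.
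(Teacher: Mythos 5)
Your argument is essentially the paper's: decompose boundary points via Lemma~\ref{lemma: bordo da bordi}, use the compactness of $B$ and of $[0,1]$ to extract norm-convergent $c_n$ and convergent $\lambda_n$, dispose of the case $\lambda=0$ directly, and in the case $\lambda>0$ show $\norma{b}=1$ and upgrade the weak convergence of $b_n$ to norm convergence using the LUR character of $\norma{\cdot}$ (the paper invokes the Kadec property of $\norma{\cdot}$ at this step; you spell out the standard LUR argument, which amounts to the same thing). Your justification that $\norma{b}=1$ via Lemma~\ref{l: segmento interno} is exactly what the paper leaves implicit in the phrase ``since $U_{(X,\norma{\cdot})}\subset\inte D$ we necessarily have $b\in S_{(X,\norma{\cdot})}$''.

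The one discrepancy worth flagging is that you work with sequences, which establishes the Kadec--Klee property, whereas the Kadec property as defined in this paper (coincidence of the weak and norm topologies on $S_X$) is a statement about nets, and the paper is careful to distinguish the two notions and to run the proof with a net $\{x_\xi\}_{\xi\in\Sigma}$. This is not a substantive gap: every step of your argument transfers verbatim to nets --- subnet extraction into the compact sets $B$ and $[0,1]$, weak lower semicontinuity of the norm, the LUR implication (which is an $\varepsilon$--$\delta$ condition on pairs of points, hence applies to nets), and the ``every subnet has a further subnet converging to $x$'' principle --- but as written your proof establishes a formally weaker property than the one claimed, and the Kadec property with nets is what is actually used later via Theorem~\ref{th: ALURsseR+KK}.
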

	
	\begin{proof}
	Suppose on the contrary that there exists a net $\{x_\xi\}_{\xi\in\Sigma}\subset S_{(X,\|\cdot\|)}$ that is $w$-convergent to an element $x\in S_{(X,\|\cdot\|)}$ and such that $\|x_\xi-x\|$ is bounded away from 0. 
	By Lemma~\ref{lemma: bordo da bordi}, for each $\xi\in\Sigma$, there 
	exist
	$\lambda_\xi\in[0,1]$, $b_\xi\in S_{(X,|\!|\!|\cdot|\!|\!|)}$, and $c_\xi\in S$ such that $x_\xi=\lambda_\xi b_\xi+(1-\lambda_\xi)c_\xi$. By compactness of $B$ and $[0,1]$, we can suppose that $\lambda_\xi\to\lambda\in[0,1]$ and that  $\{c_\xi\}_{\xi\in\Sigma}$ converges in norm to  $c\in B$.  
		Let us consider the following  two cases.
	
	\noindent {\bf Case 1: $\lambda=0$.}
	In this case, since $\{b_\xi\}_{\xi\in\Sigma}$ is bounded and $\lambda_\xi\to 0$, we have that the net $\{x_\xi\}_{\xi\in\Sigma}$ converges in norm to $c=x$.
 
	\noindent {\bf Case 2: $\lambda\in(0,1]$.} In this case, we have that 
  $\{b_\xi\}_{\xi\in\Sigma}$ converges weakly to an element $b\in B_{(X,|\!|\!|\cdot|\!|\!|)}$ (indeed, eventually $\lambda_\xi\neq0$ and hence we can write $b_\xi=\frac{x_\xi-(1-\lambda_\xi)c_\xi}{\lambda_\xi}$). Since $U_{(X,|\!|\!|\cdot|\!|\!|)}\subset \inte D$  we necessarily have $b\in S_{(X,|\!|\!|\cdot|\!|\!|)}$. Since $|\!|\!|\cdot|\!|\!|$ has the Kadec property, we have that $|\!|\!|b_\xi -b|\!|\!|\to 0$ and hence that  the net $\{x_\xi\}_{\xi\in \Sigma}$ converges in norm to $x$. 
	
	In any case, we get a contradiction and the proof is concluded.
\end{proof}

	\begin{theorem}\label{th: main theorem}
Let $X$ be a separable Banach space and 	 $|\cdot|$  defined as above. Then  the following conditions hold:
		\begin{enumerate}
			\item $(X,|\cdot|)$ is G\^ateaux smooth;
			\item $(X,|\cdot|)$ is ALUR;
	\item if $X$ is reflexive then  $(X^*,|\cdot|^*)$ is Fr\'echet smooth;
			\item $(X,|\cdot|)$ is not LUR.
		\end{enumerate}
  \end{theorem}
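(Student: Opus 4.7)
The plan is to derive (i)–(iii) directly from the three preceding propositions and to do the real work in (iv). For (i), the expression $F(x)=|x|^2=\|x\|^2+\sum_{n\geq 2}2^{-n}f_n(x)^2$ writes $F$ as the sum of a G\^ateaux differentiable convex function (Proposition \ref{prop: smooth}) and a uniformly convergent series of G\^ateaux differentiable quadratic forms; at every $x\neq 0$ each subdifferential reduces to a singleton, so $\partial F(x)$ is a singleton and $|\cdot|=\sqrt F$ is G\^ateaux smooth. For (ii), rotundity is Proposition \ref{prop: rotund} and Theorem \ref{th: ALURsseR+KK}(i) reduces the rest to the Kadec property for $|\cdot|$: given $\{x_\alpha\}\subset S_{(X,|\cdot|)}$ with $x_\alpha\to x\in S_{(X,|\cdot|)}$ weakly, the boundedness of $\{x_\alpha\}$ together with $|\!|\!|f_k|\!|\!|^*=1$ lets dominated convergence yield $\sum 2^{-k}f_k(x_\alpha)^2\to\sum 2^{-k}f_k(x)^2$, whence $\|x_\alpha\|\to\|x\|>0$ from $|x_\alpha|^2=|x|^2=1$; normalizing and applying Proposition \ref{prop: KP} to $x_\alpha/\|x_\alpha\|\to x/\|x\|$ then gives norm convergence. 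Item (iii) follows at once from Theorem \ref{th: ALURsseR+KK}(ii).

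For (iv), I take $x:=\sqrt{2}\,e_1\in S$ (so $\|x\|=1$, $f_k(x)=0$ for $k\geq 2$, and $|x|=1$) and work in the planar sections $V_n:=\mathrm{span}\{e_1,e_{3n}\}$. By \eqref{eq: non ellisse in e_1} together with $|\!|\!|e_{3n}|\!|\!|=|\!|\!|g_{3n}|\!|\!|^*=1$, the section $B_{(X,|\!|\!|\cdot|\!|\!|)}\cap V_n$ is a unit disc in the $(g_1,g_{3n})$-coordinates and $B\cap V_n$ is an ellipse of semi-axes $\sqrt{2}$ (along $e_1$) and $1/(3n)^2$ (along $e_{3n}$). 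A direct common-tangent computation exhibits points $p_n\in\partial B_{(X,|\!|\!|\cdot|\!|\!|)}\cap V_n$ and $q_n\in S\cap V_n$ together with a supporting functional $\phi_n=\cos\theta_n\,g_1+\sin\theta_n\,g_{3n}\in X^*$ of $D$ that attains value $1$ on both of them; in particular $[p_n,q_n]\subset\partial D$, $p_n\to\tfrac{1}{\sqrt{2}}(e_1+e_{3n})$ (componentwise), and $q_n=\sqrt{2}\,e_1+O(n^{-4})$. Setting $x_n:=(p_n+q_n)/2\in\partial D$ gives $\|x_n\|=1$; moreover $f_k(x_n)=0$ for $k\geq 2$ with $k\neq 3n$ and $|f_{3n}(x_n)|\leq 1$, so $|x_n|^2-1=2^{-3n}f_{3n}(x_n)^2\to 0$. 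The identity
\[
(x+x_n)/2=\tfrac14 p_n+\tfrac34 q_n+\tfrac12(\sqrt{2}\,e_1-q_n)
\]
places the midpoint within $O(n^{-4})$ of the flat segment $[p_n,q_n]$, whence $\|(x+x_n)/2\|\to 1$; combining with the previous step this yields $2|x|^2+2|x_n|^2-|x+x_n|^2\to 0$. On the other hand $x_n-x\to\tfrac{1}{2\sqrt{2}}(e_{3n}-e_1)$, and $|\!|\!|e_{3n}-e_1|\!|\!|=\sqrt{2}$ by \eqref{eq: non ellisse in e_1}, so $\|x_n-x\|$ is bounded away from $0$. Hence $|\cdot|$ fails LUR at $x$.

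The main obstacle is the two-dimensional common-tangent construction together with the quantitative estimate $q_n=\sqrt{2}\,e_1+O(n^{-4})$, which is what forces $(x+x_n)/2$ to approach $\partial D$ at the required rate; once this geometric fact is in hand, every other step in (iv) is a routine algebraic verification, while (i)–(iii) are essentially immediate bookkeeping on top of Propositions \ref{prop: rotund}, \ref{prop: smooth}, and \ref{prop: KP}.
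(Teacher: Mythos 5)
Your items (i)--(iii) follow the paper's proof essentially verbatim (the paper is even slightly terser in (ii), omitting the normalization $x_\alpha/\|x_\alpha\|$ that you make explicit), so the only real point of comparison is (iv), where you take a genuinely different route. The paper does not compute the common tangents: it fixes the direction $x_n^*=g_1+g_{3n}$, notes that $\sup x_n^*(B_{(X,|\!|\!|\cdot|\!|\!|)})=\sqrt2$ is attained at $x_n=\tfrac{1}{\sqrt2}(e_1+e_{3n})$ while $\sup x_n^*(B)\leq\sqrt2+(3n)^{-2}$, and then shows that the midpoint $\tfrac{x_0+x_n}{2}$ (with $x_0=\sqrt2\,e_1$) lies within $O(n^{-2})$ of $\partial D$ because adding $\tfrac{x_n}{\sqrt2(3n)^2}$ pushes it past the level $\sup x_n^*(D)$. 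Your version instead locates the exact flat segments $[p_n,q_n]$ of $\partial D$ in the sections $V_n$ and places your $x_n$ on them; this costs more work (the tangency computation, which does go through: $\cos^2\theta_n=\tfrac{1-(3n)^{-4}}{2-(3n)^{-4}}$, giving $p_n\to\tfrac{1}{\sqrt2}(e_1+e_{3n})$ and $q_n=\sqrt2\,e_1+O(n^{-4})$ exactly as you claim) but is geometrically more transparent, since it exhibits the actual faces of $D$ witnessing the failure of LUR, and it buys the cleaner identity $\|x_n\|=1$ in place of the paper's approximate estimates. One step you should make explicit: a common supporting line of the two planar sections is only useful if it is the trace of a supporting hyperplane of the full body $D$, i.e., you need $\sup_{D}(\alpha g_1+\beta g_{3n})=\sup_{D\cap V_n}(\alpha g_1+\beta g_{3n})$. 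This is true but not automatic for sections of a convex body; here it follows from $\sup_{B_{(X,|\!|\!|\cdot|\!|\!|)}}(\alpha g_1+\beta g_{3n})=\sqrt{\alpha^2+\beta^2}$ for $\alpha,\beta\geq0$ (a consequence of \eqref{eq: non ellisse in e_1} together with the fact that $|\!|\!|g_{3n}|\!|\!|^*=1$ is attained at $e_{3n}$) and the analogous computation on $B=T[B_{\ell_2}]$, both suprema being attained in $V_n$. With that observation added, your argument is complete and correct.
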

	
	\begin{proof}(i) It is sufficient to observe that the function $H:{X}\to \R$, defined by $$H(x)= \sum_{n=2}^\infty2^{-n}[f_n(x)]^2,\qquad x\in{X},$$ is G\^ateaux differentiable on ${X}$. By Proposition~\ref{prop: smooth}, the map $F=|\cdot|^2$ is sum of two G\^ateaux differentiable functions and hence $|\cdot|$ is G\^ateaux smooth.
		
		\noindent (ii) By Proposition~\ref{prop: rotund} and Theorem~\ref{th: ALURsseR+KK}, it is sufficient to prove that  $|\cdot|$ has the Kadec property. 
  Let $\{x_\xi\}_{\xi\in\Sigma}\subset S_{(X,|\cdot|)}$ be a net that is $w$-convergent to an element $x\in S_{(X,|\cdot|)}$.
  Then we have that, for each $n\in\N$, $\lim_\xi f_n(x_\xi)= f_n(x)$. This easily implies that the function $H$, defined as in the previous point, satisfies $\lim_\xi H(x_\xi)=H(x)$. Hence, by the definition of $|\cdot|$, we have $\lim_\xi\|x_\xi\|=\|x\|$. By Proposition~\ref{prop: KP} and the fact that $\{x_\xi\}_{\xi\in\Sigma}$  $w$-converges to $x$, we have that $\{x_\xi\}_{\xi\in\Sigma}$  $\|\cdot\|$-converges to $x$. 
		
		\noindent (iii)	The proof follows by Theorem~\ref{th: ALURsseR+KK}, and the previous point.		

 \noindent (iv)
  Let us prove that $x_0=\sqrt2 e_1\in S_{(X,|\cdot|)}$ is not a LUR point for $B_{(X,|\cdot|)}$. For $n\in \N$, define $$x_n=\frac{1}{\sqrt2}e_1+\frac{1}{\sqrt2}e_{3n},\qquad x_n^*=f_1+f_{3n}=g_1+g_{3n},$$
		and observe that
		\begin{enumerate}[(a)]
			\item $x_n\in B_{{(X,|\!|\!|\cdot|\!|\!|)}}\subset D$;
			\item $\{x_n^*\}_n$ is bounded in $X^*$;
			\item for each $n\in\N$, we have $$\sup x_n^*(B_{{(X,|\!|\!|\cdot|\!|\!|)}})=\sqrt2,\qquad\sup x_n^*(B)\leq\sqrt2+\frac1{(3n)^2}$$
			and hence  
			$$\sup x_n^*(D)\leq\sqrt2+\frac1{(3n)^2};$$
			\item for each $n\in\N$, we have $$ x_n^*(x_n)=x_n^*(x_0)=\sqrt2=x_n^*\left(\frac{x_0+x_n}{2}\right).$$
		\end{enumerate}

	\noindent Now, for $n\in\N$, define $z_n=\frac{x_0+x_n}{2}+\frac{x_n}{\sqrt2(3n)^2}$, and observe that $$x_n^*(z_n)=\sqrt2+\frac1{(3n)^2}\geq\sup x_n^*(D).$$	
 In particular, we have $\|z_n\|\geq1$ and hence there exists $w_n\in[\frac{x_0+x_n}{2},z_n]\cap\partial D$. 
Hence,
$$\textstyle\dist_{\|\cdot\|}\left(\frac{x_0+x_n}{2}, \partial D\right)\leq \|\frac{x_0+x_n}{2}-w_n\|\leq \|\frac{x_0+x_n}{2}-z_n\|\to 0,$$
  that is, $\|\frac{x_0+x_n}{2}\|\to 1$. Let us also observe that $ |x_0|=1$ and that, for each $n\in\N$, we have
		$$\textstyle |x_n|^2\leq1+2^{-3n-1},\qquad\left|\frac{x_0+x_n}{2}\right|^2\geq\left\|\frac{x_0+x_n}{2}\right\|^2;$$
  hence, since $2|x_0|^2+2|x_n|^2-|x_0+x_n|^2\geq0$, we have 
  $$\lim_n (2|x_0|^2+2|x_n|^2-|x_0+x_n|^2)=0.$$
  On the other hand, for each $n\in\N$, we have 
  $|\!|\!|e_1-e_n|\!|\!|\geq g_1(e_1-e_n)=1$,
		and hence the sequence $\{x_n\}_n$ does not converge in norm to $x_0$, the proof is concluded. 
	\end{proof}

 We conclude this section with a trivial consequence of the previous result. Recall that a Banach space is said \textit{weakly locally uniformly rotund (WLUR)} if each point of its unit sphere is WLUR (see Definition \ref{d: basic def}).
\begin{corollary}
    Every infinite-dimensional separable Banach space admits an ALUR G\^ateaux smooth equivalent norm which is not WLUR.
\end{corollary}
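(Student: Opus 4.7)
The plan is to show that the very sequence $\{x_n\}_n$ used in the proof of Theorem~\ref{th: main theorem}(iv) already witnesses the failure of the norm $|\cdot|$ to be WLUR, not merely of LUR. By Theorem~\ref{th: main theorem}(i)--(ii), the norm $|\cdot|$ on $X$ is Gâteaux smooth and ALUR, so it remains only to exhibit one point of the unit sphere that fails to be a WLUR point of $B_{(X,|\cdot|)}$.

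Recall from the proof of Theorem~\ref{th: main theorem}(iv) that $x_0 = \sqrt{2}\,e_1 \in S_{(X,|\cdot|)}$ and $x_n = \tfrac{1}{\sqrt{2}}\,e_1 + \tfrac{1}{\sqrt{2}}\,e_{3n}$ satisfy
\[
\lim_n \bigl(2|x_0|^2 + 2|x_n|^2 - |x_0+x_n|^2\bigr) = 0.
\]
This identity is already established in that proof, so it is only necessary to verify that $\{x_n\}_n$ fails to converge weakly to $x_0$. This is immediate from testing against the coordinate functional $g_1$ of the M-basis $(e_n,g_n)_{n\in\N}$: by biorthogonality,
\[
g_1(x_n) = \tfrac{1}{\sqrt{2}} \quad \text{for every } n \in \N, \qquad \text{while} \qquad g_1(x_0) = \sqrt{2}.
\]
Hence $g_1(x_n) \not\to g_1(x_0)$, so $x_n$ does not converge weakly to $x_0$. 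Therefore $x_0$ is not a WLUR point and $|\cdot|$ is not WLUR.

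This is indeed a ``trivial consequence'' in the spirit of the paper: the sequence $\{x_n\}_n$, the point $x_0$, and the limit identity are already in hand from the proof of Theorem~\ref{th: main theorem}(iv); the only new observation is the one-line check that weak convergence fails at the very first M-basis coordinate. No genuine obstacle arises, and the separability of $X$ plays no explicit role beyond what is already embedded in Theorem~\ref{th: main theorem}.
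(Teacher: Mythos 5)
Your proof is correct, but it takes a genuinely different route from the paper. The paper's proof is a one-line reduction: it observes that the norm $|\cdot|$ has the Kadec--Klee property (this is already contained in the proof of Theorem~\ref{th: main theorem}(ii)), and since any WLUR norm with the Kadec--Klee property is automatically LUR (if $2|x_0|^2+2|x_n|^2-|x_0+x_n|^2\to0$ then $|x_n|\to|x_0|=1$, so weak convergence of $x_n$ to $x_0$ upgrades to norm convergence of $x_n/|x_n|$, hence of $x_n$), the failure of LUR established in part (iv) already forces the failure of WLUR. Your argument instead goes back into the proof of part (iv) and checks directly that the witnessing sequence $x_n=\tfrac{1}{\sqrt2}e_1+\tfrac{1}{\sqrt2}e_{3n}$ does not converge weakly to $x_0=\sqrt2\,e_1$, because $g_1(x_n)=\tfrac{1}{\sqrt2}\neq\sqrt2=g_1(x_0)$ for all $n$. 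Both are complete; yours is more concrete and self-contained (it avoids invoking, even implicitly, the general implication $\text{WLUR}+\text{Kadec--Klee}\Rightarrow\text{LUR}$, and identifies an explicit functional separating the sequence from its would-be weak limit), while the paper's is shorter and emphasizes that the obstruction is structural rather than tied to the particular sequence. One small point worth making explicit in your write-up: the paper's definition of a WLUR point quantifies over all sequences $\{x_n\}_n\subset X$ satisfying the limit condition, with no normalization requirement, so your sequence (which lies in $B_{(X,|\cdot|)}$ but not on the sphere) is indeed an admissible test sequence.
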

\begin{proof}
    It follows by observing that the norm defined in Theorem \ref{th: main theorem} satisfies the Kadec-Klee property.
\end{proof}

\section{Consequences in nonseparable Banach spaces}

The present section is devoted to lift the construction made in Section \ref{sec: main result} to the nonseparable setting. Let us first notice that if a Banach space has an ALUR norm, then it admits an equivalent LUR norm, see \cite[Proposition 1.4]{Tro85}. Therefore, not every Banach space can be renormed with an ALUR G\^ateaux smooth norm which is not LUR. On the other hand, the following result, which is an immediate consequence of Theorem \ref{th: main theorem}, provides a sufficient condition for a Banach space to have an equivalent ALUR G\^ateaux smooth norm which is not LUR. 
\begin{corollary}\label{cor:lift}
Let $(X,\|\cdot\|)$ be a Banach space and $Y\subset X$ be a separable subspace. Suppose that:
\begin{itemize}
    \item the norm $\|\cdot\|$ is LUR and G\^ateaux smooth;
    \item $Y$  is infinite-dimensional and complemented in $X$.
\end{itemize}
Then $X$ admits an equivalent norm which is ALUR, G\^ateaux smooth but not LUR.
\end{corollary}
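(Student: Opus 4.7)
The plan is to glue together the norm produced by Theorem~\ref{th: main theorem} on $Y$ with the original LUR G\^ateaux smooth norm $\|\cdot\|$ on $X$, via the given projection onto $Y$. Since $Y$ is complemented in $X$, I fix a bounded linear projection $P\colon X\to Y$, with kernel $Z:=\ker P$. Being separable and infinite-dimensional, $Y$ admits by Theorem~\ref{th: main theorem} an equivalent norm $|\cdot|_Y$ that is ALUR, G\^ateaux smooth, but not LUR. I then define an equivalent norm $\norma{\cdot}$ on $X$ by
\[\norma{x}^2:=|Px|_Y^2+\|x-Px\|^2,\qquad x\in X.\]
Observe already that the restriction of $\norma{\cdot}$ to $Y$ is exactly $|\cdot|_Y$, because $Py=y$ and $(I-P)y=0$ for every $y\in Y$.

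G\^ateaux smoothness is immediate: $\norma{\cdot}^2$ is the sum of two convex G\^ateaux differentiable functions, namely the compositions of $|\cdot|_Y^2$ and $\|\cdot\|^2$ with the bounded linear operators $P$ and $I-P$. For rotundity I would follow the decoupling argument of Proposition~\ref{prop: rotund}: if $2\norma{x}^2+2\norma{y}^2-\norma{x+y}^2=0$, then nonnegativity of each of the two Clarkson-type summands forces simultaneously $2|Px|_Y^2+2|Py|_Y^2-|Px+Py|_Y^2=0$ and $2\|x-Px\|^2+2\|y-Py\|^2-\|(x-Px)+(y-Py)\|^2=0$; rotundity of $|\cdot|_Y$ and of $\|\cdot\|$ then give $Px=Py$ and $(I-P)x=(I-P)y$, whence $x=y$.

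For the ALUR property, by Theorem~\ref{th: ALURsseR+KK} it suffices to verify that $\norma{\cdot}$ has the Kadec property. Given a net $\{x_\xi\}\subset S_{(X,\norma{\cdot})}$ that is $w$-convergent to some $x\in S_{(X,\norma{\cdot})}$, continuity of $P$ gives $Px_\xi\to Px$ weakly in $Y$ and $(I-P)x_\xi\to(I-P)x$ weakly in $X$. Combining the identity $|Px_\xi|_Y^2+\|(I-P)x_\xi\|^2=|Px|_Y^2+\|(I-P)x\|^2$ with the weak lower semicontinuity of $|\cdot|_Y$ and $\|\cdot\|$, a standard subsequence argument yields $|Px_\xi|_Y\to|Px|_Y$ and $\|(I-P)x_\xi\|\to\|(I-P)x\|$. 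Since $|\cdot|_Y$ has the Kadec property (being ALUR) and $\|\cdot\|$ has it too (being LUR), the usual renormalization trick upgrades each componentwise weak convergence to norm convergence, so $x_\xi\to x$ in $\norma{\cdot}$.

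Finally, $\norma{\cdot}$ cannot be LUR, because its restriction to the closed subspace $Y$ is $|\cdot|_Y$ (which is not LUR) and LUR is inherited by closed subspaces endowed with the restricted norm. The only moderately delicate step in the plan is the Kadec verification, where one needs to convert norm equality plus componentwise weak convergence into componentwise norm convergence; the remaining checks are essentially bookkeeping built on top of Theorem~\ref{th: main theorem}.
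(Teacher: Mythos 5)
Your proposal is correct and takes essentially the same approach as the paper: the paper's proof defines exactly the norm $|x|^2=\|(I-P)x\|^2+|Px|_Y^2$ and asserts without further detail that it has the desired properties, whereas you additionally spell out the (routine) verifications of smoothness, rotundity, the Kadec property, and the failure of LUR, all of which are sound.
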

\begin{proof}
    By Theorem \ref{th: main theorem} there exists an ALUR, G\^ateaux smooth, not LUR norm $|\cdot|_Y$ on $Y$ which is equivalent to the norm $\|\cdot\|$ restricted to $Y$. Let $P:X\to X$ be a bounded linear projection onto $Y$. The norm $|\cdot|$, defined for $x\in X$ by
\begin{equation*}
    |x|^2=\|(I-P)x\|^2 + |Px|_Y^2,
\end{equation*}
     satisfies the desired properties.
\end{proof}

It is worth to notice that the hypotheses of Corollary~\ref{cor:lift} are satisfied by a wide class of Banach spaces. To clarify this, let us recall the following definitions (see, e.g., \cite[Section~6.1.6]{GMZ}).

\begin{definition}\label{def: WCGandsons}
Let $X$ be a Banach space. Then $X$ is said:
\begin{enumerate}
    \item {\em weakly compactly
generated} (WCG), if there exists a weakly compact subset $K$
of $X$ such that $\overline{\mathrm{span}} (K) = X$;
\item   {\em weakly countably
determined} (WCD) if there is a countable collection $\{K_n; n\in\N\}$ of
$w^*$-compact subsets of $X^{**}$ such that for every $x\in X$ and $u\in X^{**}\setminus X$ there is
$n_0\in\N$ for which $x\in K_{n_0}$ and $u\not\in K_{n_0}$;
\item 
{\em weakly Lindel\"of determined} (WLD)
if there exist a set $\Gamma$ and a one-to-one bounded linear operator from $X^*$ into
$\ell_\infty^c(\Gamma)$ that is $w^*$-pointwise-continuous, where $\ell_\infty^c(\Gamma)$ denotes the subspace of $\ell_\infty(\Gamma)$ formed by all elements countably supported;
\item to have the {\em separable complementation property} (SCP) if  every separable subspace of $X$ is contained in a 
complemented separable subspace of $X$.
\end{enumerate}
\end{definition}

\noindent The following implications hold (see \cite{GMZ}):
\medskip

\begin{center}
\begin{tabular}{cccccccccc}
&  & $\langle\text{R}^*\rangle$ &  &   $\langle\text{LUR}\rangle$ & & \\
        &               &      $\Uparrow$  &              &                     $\Uparrow$ &    & \\
 (WCG) & $\Rightarrow$ & (WCD) & $\Rightarrow$ & (WLD) & $\Rightarrow$& (SCP)     
  \end{tabular}
\end{center}
\medskip

\noindent where   $\langle\text{LUR}\rangle$
 means that 
 the space $X$  admits an equivalent LUR norm, and $\langle\text{R}^*\rangle$ means that the space  $X$ admits an equivalent norm such that its dual norm is rotund. Notice that none of the above implications can be reversed.

Let us recall that, by the Asplund averaging method (see \cite[Theorem~4.1]{DGZ}), if $X$ has $\langle\text{R}^*\rangle$ and  $\langle\text{LUR}\rangle$ then it admits an equivalent LUR norm such that its dual norm is rotund. Moreover, if a Banach space $X$ has the (SCP) then, in particular, it contains a complemented infinite-dimensional separable subspace. Hence, it is clear that Corollary~\ref{cor:lift} can be applied to the class (WLD) spaces having $\langle\text{R}^*\rangle$. In particular, {\em each (WCD) space admits an equivalent norm which is ALUR, G\^ateaux smooth but not LUR}.
Taking into account Theorems~\ref{th: ALURsseR+KK}~and~\ref{th: ALUR vs almost Lur}, we clearly obtain the following corollary.
\begin{corollary} Let $X$ be a reflexive space. Then $X$ admits an equivalent  G\^ateaux smooth not LUR norm $|\cdot|$  such that  $|\cdot|^*$ is Fr\'echet smooth.
    \end{corollary}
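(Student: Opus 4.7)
The plan is to invoke Corollary \ref{cor:lift} to produce the desired norm $|\cdot|$, and then to upgrade G\^ateaux smoothness of $|\cdot|$ to Fr\'echet smoothness of the dual norm via Theorem \ref{th: ALURsseR+KK}. Since the statement follows by stitching together results already established in the paper, the argument is essentially a verification that each hypothesis is met.

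First I would check that the hypotheses of Corollary \ref{cor:lift} are satisfied for any reflexive Banach space $X$. Reflexivity implies (WCG), and the chain of implications displayed in the excerpt gives (WCG) $\Rightarrow$ (WCD) $\Rightarrow$ (WLD) $\Rightarrow$ (SCP); in particular $X$ has an equivalent LUR norm and, by (SCP), every separable subspace of $X$ sits inside a complemented separable subspace. Moreover, since $X$ is reflexive, the dual of any LUR norm can be further averaged (Asplund averaging) with a norm whose dual is rotund to produce an equivalent norm that is simultaneously LUR and G\^ateaux smooth, as recalled in the paragraph preceding the corollary. Picking any infinite-dimensional separable subspace of $X$ and enlarging it via (SCP) to an infinite-dimensional complemented separable subspace $Y$, Corollary \ref{cor:lift} applies and yields an equivalent norm $|\cdot|$ on $X$ that is ALUR, G\^ateaux smooth, and not LUR.

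Next I would derive Fr\'echet smoothness of $|\cdot|^*$. Since $(X,|\cdot|)$ is ALUR, Theorem \ref{th: ALURsseR+KK}(i) ensures that $|\cdot|$ is rotund and has the Kadec property. Combined with the reflexivity of $X$, part (ii) of the same theorem directly gives that $|\cdot|^*$ is Fr\'echet smooth. This concludes the proof.

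There is essentially no obstacle: the only point requiring attention is the verification that reflexive spaces genuinely admit an equivalent LUR and G\^ateaux smooth norm and have the (SCP), both of which are standard consequences of reflexivity already cited in this section. The actual technical content is carried by Theorem \ref{th: main theorem} and Theorem \ref{th: ALURsseR+KK}, both of which are invoked as black boxes.
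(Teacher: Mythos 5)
Your proposal is correct and follows essentially the same route as the paper: reflexivity gives (WCG), hence the chain of implications and the Asplund averaging remark supply the hypotheses of Corollary~\ref{cor:lift}, and Theorem~\ref{th: ALURsseR+KK} (applied to the resulting ALUR norm on the reflexive space) yields Fr\'echet smoothness of the dual norm. The paper condenses exactly this argument into the sentence preceding the corollary together with the citation of Theorem~\ref{th: ALURsseR+KK}, so there is nothing to add.
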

    
The Banach spaces of continuous functions $C([0,\alpha])$, with $\alpha$ uncountable ordinal, constitute another relevant class of nonseparable Banach spaces to which we can apply Corollary~\ref{cor:lift}, and that is not contained in the class of (WLD) spaces. Indeed, the containment of an infinite-dimensional separable complemented subspace is trivial and existence of an equivalent norm that is LUR and  G\^ateaux smooth follows by  \cite{HP}.

\subsection*{Acknowledgements} We are grateful to the anonymous referee for a careful reading of the manuscript and for several suggestions which improved the presentation.

\end{document}